\definecolor{gr}{rgb}   {0.,   0.69,   0.23 }
\definecolor{bl}{rgb}   {0.,   0.5,   1. }
\definecolor{mg}{rgb}   {0.85,  0.,    0.85}
\definecolor{yl}{rgb}   {0.8,  0.7,   0.}
\definecolor{or}{rgb}  {0.7,0.2,0.2}
\tikzset{
	ddot/.style={circle,fill=white,draw=black,inner sep=0pt,minimum size=0.8mm},
	>=stealth,
	}
\tikzset{
	ddot2/.style={circle,fill=black,draw=black,inner sep=0pt,minimum size=0.8mm},
	>=stealth,
	}
\newtheorem{theorem}{Theorem} [section]
\newtheorem{lemma}[theorem]{Lemma}
\newtheorem{proposition}[theorem]{Proposition}
\newtheorem{remark}[theorem]{Remark}
\newtheorem{definition}[theorem]{Definition}
\newtheorem{corollary}[theorem]{Corollary}
\DeclareMathOperator{\med}{med}
\newcommand{\noi}{\noindent}
\newcommand{\Z}{\mathbb{Z}}
\newcommand{\R}{\mathbb{R}}
\newcommand{\C}{\mathbb{C}}
\newcommand{\T}{\mathbb{T}}
\newcommand{\nb}{\nabla}
\newcommand{\Dl}{\Delta}
\newcommand{\eps}{\varepsilon}
\newcommand{\G}{\Gamma}
\newcommand{\ld}{\lambda}
\newcommand{\ft}{\widehat}
\newcommand{\wt}{\widetilde}
\newcommand{\cj}{\overline}
\newcommand{\dt}{\partial_t}
\newcommand{\HS}{\textup{HS}}
\renewcommand{\l}{\ell}
\renewcommand{\o}{\omega}
\renewcommand{\O}{\Omega}
\newcommand{\les}{\lesssim}
\newcommand{\ges}{\gtrsim}
\newcommand{\jb}[1]
{\langle #1 \rangle}
\newcommand{\ind}{\mathbf 1}
\newcommand{\NN}{\mathcal{N}}
\newcommand{\pa}{\partial}
\newcommand{\N}{\mathbb{N}}
\newtheorem*{ackno}{Acknowledgements}
\newcommand{\Nb}{\mathbf{N}}
\numberwithin{equation}{section}
\numberwithin{theorem}{section}
\begin{document}
\baselineskip = 13.5pt
\baselineskip = 13.5pt

\title[GWP of the energy critical SNLS on $\T^3$]
{Global well-posedness of the energy-critical stochastic nonlinear Schr\"odinger equation
\\
on the three-dimensional torus}

\author[G.~Li, M.~Okamoto, and L.~Tao]
{Guopeng Li, Mamoru Okamoto, and Liying Tao}

\address{
Guopeng Li, School of Mathematics\\
The University of Edinburgh\\
and The Maxwell Institute for the Mathematical Sciences\\
James Clerk Maxwell Building\\
The King's Buildings\\
Peter Guthrie Tait Road\\
Edinburgh\\ 
EH9 3FD\\
United Kingdom, 
and
Department of Mathematics and Statistics, 
Beijing Institute of Technology, 
Beijing, China}
\email{guopeng.li@bit.edu.cn}

\address{
Mamoru Okamoto\\
Department of Mathematics\\
Graduate School of Advanced Science and 
Engineering, Hiroshima University\\
1-3-1 Kagamiyama, Higashi-Hiroshima, 739-8526\\ Japan}
\email{mokamoto@hiroshima-u.ac.jp}

\address{Liying Tao,
Graduate School of China Academy of Engineering Physics, Beijing, 100088, China}
\email{taoliying20@gscaep.ac.cn}

\begin{abstract}
We study the Cauchy problem of the defocusing energy-critical stochastic nonlinear Schr\"odinger equation (SNLS) on the three dimensional torus,
 forced by an additive noise.
We adapt the atomic spaces framework in the context of the energy-critical nonlinear Schr\"odinger equation,
and employ probabilistic perturbation arguments in the context of stochastic PDEs, 
establishing the global well-posedness of the defocusing energy-critical quintic SNLS in the energy space. 
It is the first global well-posedness result 
for the periodic SNLS in a critical space.

\end{abstract}

\subjclass[2010]{35Q55,  	60H15}

\keywords{stochastic nonlinear Schr\"odinger equation; 
global well-posedness; energy-critical; atomic spaces; perturbation theory}

\maketitle

\tableofcontents

\section{Introduction}
\subsection{Stochastic nonlinear Schr\"odinger equation}
We consider the Cauchy problem for the stochastic nonlinear Schr\"{o}dinger equation (SNLS) with an additive noise:
\begin{equation}
\label{SNLS}
\begin{cases}
 i \dt u + \Dl  u = |u|^{4} u + \phi \xi \\
 u|_{t = 0} = u_0,
\end{cases}
\qquad (t, x) \in \R_+\times \T^3,
\end{equation}

\noi
where $\xi (t,x)$ denotes a space-time white noise on $\R_+\times \T^3$ and $\phi$ is a bounded operator on $L^2(\T^3)$. In this paper, we restrict our attention to the defocusing case.
Our main goal is to  establish the global well-posedness in $H^1(\T^3)$ of \eqref{SNLS}
via atomic spaces machinery.

Let us first go over the notion of the scaling-critical regularity
for the  (deterministic) defocusing nonlinear Schr\"odinger equation (NLS) on $\R^3$:
\begin{equation}
i \partial_t u +   \Delta u =  |u|^{4} u.
\label{NLS1}
\end{equation}

\noi
The NLS  \eqref{NLS1} on $\R^3$ is known to enjoy the following dilation symmetry:
\begin{equation*}
 u(t, x) \longmapsto u_\ld(t, x) = \ld^{\frac 12 } u (\ld^{2}t, \ld x)
\end{equation*}

\noi
for $\ld >0$. Namely, if $u$ is a solution to \eqref{NLS1} on $\R^3$, then the scaled function $u_\ld$ is also a solution to \eqref{NLS1} on $\R^3$.
A direct computation yields
\[
\| u(0) \|_{\dot H^1(\R^3)} =\| u_\ld (0) \|_{\dot H^1(\R^3)},
\]

\noi
which shows that the
scaling-critical Sobolev index is $ s_\text{crit} = 1$.
It is also easy to check that 
the energy
\[
\frac 12 \int_{\R^3} |\nb u(t, x)|^2 dx
+ \frac{1}{6} \int_{\R^3} |u(t, x)|^6 dx
\]

\noi
is invariant under 
this dilation symmetry.
For these reasons,
we call \eqref{NLS1} the energy critical NLS.
While there is no dilation symmetry on $\T^3$, 
we still refer to \eqref{NLS1} on $\T^3$ as energy critical.

The study of energy-critical NLS has always been a challenging problem. 
Especially on periodic domains, 
due to the lack of dispersion,
the well-posedness problem has been notably arduous, which requires much more delicate arguments. 
For the quintic NLS on $\T^3$, 
the  small data global well-posedness result in $H^1(\T^3)$ was done by 
Herr-Tataru-Tzvetkov \cite{HTT11},
where they made the first  energy-critical global well-posedness result in 
the setting of compact manifolds.
However, the removal of this smallness condition 
 is a remaining difficult issue. 
Still, after substantial efforts via
 profile decomposition, Ionescu-Pausader \cite{IP12}
 proved large data global well-posedness in $H^1(\T^3)$,
 in the defocusing case, and it was the first critical result for NLS on a compact manifold.
To see more deterministic energy-critical NLS in other settings in
\cite{BO99, CKSTT08, IP12b,KM06, KV10, KV12, RV07, V07, Y21}

In general, the study of SNLS at critical regularities has only a handful of results; see
\cite{BOP15, CL22, OO20, OOP19} for those are the most relevant to the current work, 
and see also \cite{FX19, FX21} with the approach using subcritical approximation. 
It is noteworthy that all of these works focus on the Euclidean space. 
To the best of our knowledge,
the result presented in this work is the first study of
global result for the energy-critical SNLS in the periodic setting.

\subsection{Main result}
Our main goal in this paper is to prove global well-posedness of the defocusing energy-critical SNLS \eqref{SNLS}. We say that $u$ is a solution to \eqref{SNLS} if it satisfies the following Duhamel formulation (or mild formulation):
\[
u(t)
= S(t)u_0
    -i\int_0^t S(t-t') (|u|^4u) (t')dt'
    -i\int_0^t S(t-t')\phi\xi(dt'),
\]

\noi
where $S(t)\coloneqq e^{it\Delta}$ denotes the linear Schr\"{o}dinger propagator.
The last term is known as the stochastic convolution and we denote it by
\begin{equation}\label{Psi}
    \Psi(t)\coloneqq -i\int_0^t S(t-t')\phi\xi(dt'),
\end{equation}

\noi
see \eqref{Psi'} below for a precise definition. The regularity of $\Psi(t)$ is dictated by the nature of $\phi$. In particular, if we assume that $\phi\in\HS(L^2(\T^3);H^s(\T^3))$ for appropriate values of $s \in \R$, guaranteeing that $\Psi\in C(\R_+; H^s(\T^3))$ almost surely, see Lemma \ref{Lem:sconv}. In Subsection \ref{Sec:s-conv}, we give a further discussion on the stochastic convolution.

The following is the main result of this paper.
\begin{theorem}
\label{Th}
Let $\phi\in\HS(L^2(\T^3);H^1(\T^3))$. 
Then, the defocusing energy-critical SNLS \eqref{SNLS} is globally well-posed
in $H^1(\T^3)$.
In particular, solutions are unique in the class
\[
\Psi + X^1 (\R_+),
\]
where
$X^1(\R_+)$
is defined in \eqref{XI} below.
\end{theorem}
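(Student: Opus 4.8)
The plan is to solve \eqref{SNLS} pathwise through the decomposition $u = \Psi + v$, where $\Psi$ is the stochastic convolution \eqref{Psi} and the remainder $v = u - \Psi$ satisfies the forced, but now $\omega$-by-$\omega$ deterministic, equation
\[
i\partial_t v + \Delta v = |v + \Psi|^4(v + \Psi), \qquad v|_{t=0} = u_0,
\]
which I regard as a random perturbation of the defocusing energy-critical NLS \eqref{NLS1}. By Lemma \ref{Lem:sconv}, the hypothesis $\phi \in \HS(L^2(\T^3);H^1(\T^3))$ puts $\Psi \in C(\R_+;H^1(\T^3))$ almost surely; a small refinement of this places $\Psi$ in the critical atomic-space norms on every finite interval, which is what the nonlinear analysis actually consumes. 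All of the estimates below are then carried out on a fixed realization on which $\Psi$ has finite norm.

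\textbf{Step 1 (Local theory).} First I establish local well-posedness for $v$ in $X^1$ by a contraction-mapping argument built on the $U^p$--$V^p$ machinery. Setting $z = v+\Psi$ and expanding $|z|^4 z$ into monomials $v^a \bar v^b \Psi^c \bar\Psi^d$ with $a+b+c+d = 5$, the purely-$v$ term $|v|^4 v$ is handled exactly as in \cite{HTT11}, while each remaining mixed monomial is controlled by the same multilinear $X^1$ estimates with some factors of $v$ replaced by $\Psi$. This produces a unique local solution $v \in X^1(I)$ on a short interval $I$, whose existence time depends on the profile (not merely the $H^1$ norm) of the data and on the size of $\Psi$.

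\textbf{Step 2 (Energy control).} Since the existence time is profile-dependent, the local theory does not globalize on its own, so I first rule out finite-time $H^1$ blow-up via a modified-energy inequality. Writing $E(v) = \tfrac12\|\nabla v\|_{L^2}^2 + \tfrac16\|v\|_{L^6}^6$ and differentiating along the flow, the noise contributes source terms that, after integration by parts, are dominated by powers of $\|\Psi\|_{H^1}$ and $\|v\|_{H^1}$; a Gronwall-type argument then yields an a priori bound $\sup_{t\in[0,T]}\|v(t)\|_{H^1} \le C\big(T,\|u_0\|_{H^1},\sup_{[0,T]}\|\Psi(t)\|_{H^1}\big)$ on each finite interval. \textbf{Step 3 (Globalization by perturbation).} Given a finite $T$, I partition $[0,T]$ into finitely many subintervals $I_1,\dots,I_J$ on which the critical norm of $\Psi$ lies below the perturbation threshold; finiteness of $J$ follows from the almost-sure finiteness of $\|\Psi\|$ in the relevant space on $[0,T]$. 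On each $I_j$ I compare $v$ with the global solution $w_j$ of \eqref{NLS1} emanating from the restart datum $v(t_{j-1})$, whose spacetime norm is bounded by $C(\|v(t_{j-1})\|_{H^1})$ thanks to \cite{IP12}; the error $|w_j+\Psi|^4(w_j+\Psi) - |w_j|^4 w_j$ is small because $\Psi$ is small on $I_j$. The stability/perturbation lemma for the energy-critical NLS, reformulated in the atomic-space setting, then shows $v$ stays close to $w_j$, so it has finite $X^1(I_j)$ norm and continues past $t_j$. Because Step 2 keeps every restart datum uniformly bounded in $H^1$, the comparison solutions $w_j$ admit a uniform spacetime bound, the threshold can be fixed independently of $j$, and the scheme closes after $J$ steps to give a solution on $[0,T]$; uniqueness in $\Psi + X^1(\R_+)$ follows from the contraction of Step 1 together with this stability estimate.

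The hard part is Step 3: transplanting the deterministic stability theory of the energy-critical NLS into the atomic-space framework and verifying that the terms generated by $\Psi$ are genuine subcritical perturbations, so that the number of subintervals stays finite and the iteration never loses control of the accumulated $X^1$ norm. It is precisely the interplay between the energy growth bound of Step 2 (uniform control of the restart data) and the deterministic global spacetime bound of \cite{IP12} (uniform control of the comparison flows) that makes the finitely-many-interval patching succeed.
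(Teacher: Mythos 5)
Your overall architecture (first-order expansion $u=\Psi+v$, local theory in atomic spaces, an a priori energy bound, then iteration of the Ionescu--Pausader perturbation lemma against the global deterministic flow) coincides with the paper's, but your Step 1 has a genuine gap at precisely the point where the paper's main technical work lies. You claim the mixed monomials $v^a\bar v^b\Psi^c\bar\Psi^d$ are ``controlled by the same multilinear $X^1$ estimates with some factors of $v$ replaced by $\Psi$''. This fails: those estimates (Proposition \ref{Prop:uuu} and the trilinear estimate of \cite{HTT11}) require every input to be placed in $U^2_\Delta/V^2_\Delta$-based spaces ($X^1$, $Y^1$, $M$), and the stochastic convolution almost surely does \emph{not} belong to any $V^2$-type space: Brownian paths have finite $p$-variation only for $p>2$, so $\Psi\notin Y^1(I)$ and the atomic/almost-orthogonality argument of \cite[Proposition 3.5]{HTT11} cannot be run with $\Psi$ as one of its inputs. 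The paper circumvents this by proving new trilinear estimates (Proposition \ref{Prop:uff}) in which the stochastic factors are measured only in $L^p_{t,x}$-type norms --- ultimately the $Z(I)$-norm \eqref{norm:Z} built from $\l^2 L^q_t W^{1,r}_x$ via Lemma \ref{Lem:sconv1} --- combined with a case-by-case quintic analysis (Cases A--D in the proof of Proposition \ref{Prop:non est}), which is needed because the symmetry of the purely deterministic estimate is broken once a factor is stochastic. Without an estimate of this kind your contraction argument does not close, so Step 1 as written is not a proof.

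Step 2 is also not viable as described. For the energy-critical quintic nonlinearity, pathwise differentiation of $E(v)$ along the perturbed flow cannot be closed by Gronwall in terms of $\|v\|_{H^1}$ and $\|\Psi\|_{H^1}$ alone: substituting the equation for $\partial_t v$ and integrating by parts produces terms such as $\int_{\T^3}|\nabla v|^2|v|^3|\Psi|\,dx$ and $\int_{\T^3}|\nabla v|\,|v|^4|\nabla\Psi|\,dx$, which are not dominated by powers of those norms (that would require, e.g., $v\in L^{12}_x$ or $L^\infty_x$, i.e.\ a subcritical nonlinearity). The paper instead proves the a priori bound (Proposition \ref{Prop:as E}) for $u$ itself, in expectation, via Ito's lemma and the Burkholder--Davis--Gundy inequality: the Hamiltonian structure cancels the dangerous pairing of the nonlinearity with $\Delta u$, leaving only Ito correction and martingale terms controlled by $\|\phi\|_{\HS(L^2;H^1)}$ and the sign-definite defocusing energy; this is then fed pathwise into the iteration. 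Your Step 3 is essentially the paper's scheme, up to two details worth fixing: the subintervals must also be chosen so that the deterministic comparison flow $w$ is small in $\wt X^1$ (not only $\Psi$ small in $Z$), since the local solver (Proposition \ref{Prop:loc}) needs smallness of the free evolution of the restart data; and the perturbation lemma (Lemma \ref{Lem:Per}) is applied with $v$ as the approximate solution and error $e$ as in \eqref{pert}, not with the error attached to $w_j$. Once Steps 1 and 2 are repaired along the paper's lines, the globalization goes through as you outline.
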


Let us briefly go over our method of the proof. 
We first notice that we apply the first order expansion to 
a solution to  \eqref{SNLS}, which we denote as 
$ u =v+\Psi$.
Then, by using this solution,
we now rewrite the equation \eqref{SNLS} and 
$v$ satisfies
\begin{align}\label{SNLS2}
\begin{cases}
i \dt v + \Dl  v =|v|^4v+ e \\
v|_{t = 0} = u_0.
\end{cases}
\end{align} 

\noi
Here, $e= e(v,\Psi)$ is defined to be
\begin{align}
\label{pert}
\begin{split}
e &:=|v+\Psi|^{4} (v+\Psi)-|v|^4v.
\end{split}
\end{align} 

\noi
The equation \eqref{SNLS2} can be viewed as the energy-critical NLS \eqref{NLS1} with the perturbation $e$.

Our main approach to prove Theorem \ref{Th} is that we adapt the atomic spaces machinery to the stochastic setting.
Our  main difficulty comes from the local theory, particularly,
the stochastic convolution does not belong to a $V^2$-type space,
which is a space of bounded $2$-variation functions.
See \eqref{eq:norm_v} below.
Indeed,
L\'evy's modulus of continuity theorem
shows that
the Brownian motion has a finite $p$-variation
if and only if $p >2$.
Hence, the stochastic convolution belongs only to a $V^p$-type space for $p>2$.
See also \cite{CO1}.
We then notice that the proof of \cite[Proposition 3.5]{HTT11} cannot be directly applied to terms involving stochastic objects.
However, thanks to the spatial integrability of the stochastic convolution,
we can still obtain trilinear estimates that involve  stochastic terms (Proposition \ref{Prop:uff}).

As for the quintic-linear estimates,
we decompose the nonlinearity into three parts: 
the purely deterministic contribution, the purely stochastic contribution, 
and the mixed interaction terms involving both the deterministic component and the stochastic forcing term.
The machinery of atomic spaces is primarily employed to handle the purely deterministic contribution, 
following the well-established results in \cite{HTT11, IP12}. 
The purely stochastic part can be estimated directly using elementary inequalities; 
see the beginning of the proof of Proposition \ref{Prop:non est} for details.
The mixed interaction terms constitute the main novelty of this paper. 
These terms require a careful combination of atomic space techniques and stochastic calculus. 
We then apply careful case-by-case analysis with Proposition \ref{Prop:uff}.

In the global theory,
the regularity properties of $\Psi$ (Subsection \ref{Sec:s-conv} below) 
and the a priori bound on the energy will allow us to invoke 
the perturbation lemma from \cite{IP12} (Lemma \ref{Lem:Per} below) 
on $e$ iteratively over any given target time $T>0$
 to construct a solution $v$ to \eqref{SNLS2}. Hence, we finish our proof of  Theorem \ref{Th}.

\begin{remark}
\rm
Theorem \ref{Th} extends to $\T^4$ with a slightly modification on the trilinear and 
quinticlinear estimates;
see \cite{Y21} for the deterministic study and then an approach similar to that of this paper applies.

\end{remark}

The rest of the paper is organized as follows.
In Section \ref{SEC:pre}, 
we introduce some notations, 
recall atomic spaces machinery, 
and state  regularity properties of the stochastic convolution.
Then, we give a proof of local well-posedness of the perturbed NLS \eqref{SNLS2}
 in Section
\ref{SEC:Local},
we also present the key perturbation lemma. 
Next, we apply the perturbation lemma to show the global existence of solutions to the deterministic perturbed NLS.
Finally, Section
\ref{SEC:Th}
is devoted to proving Theorem \ref{Th}.

\section{Preliminary results and atomic spaces}
\label{SEC:pre}
In the following, we denote that $A\lesssim B$ if $A\leq CB$ for some constants $C > 0$. We also write $A\ll B$ if the implicit
constant is regarded as small.

Set
\[
\int_{\T^3} f(x) dx
:=
\frac 1{(2\pi)^3} \int_{[-\pi,\pi]^3} f(x) dx.
\]
We define the spatial Fourier coefficients
\begin{equation*}
    \widehat{f}(\zeta):= \int_{\T^{3}} e^{-i \zeta \cdot x}
f(x) dx
    \quad \text{for }\zeta \in \Z^{3}.
\end{equation*}
For periodic distributions $f$,
the Fourier series expansion holds:
\[
f(x) = \sum_{\zeta \in \Z^3} \ft f(\zeta) e^{i \zeta \cdot x}.
\]

Given $1 \le q,r\leq \infty$ and a time interval $I\subset\R$,
we consider the mixed Lebesgue space $L^q(I; L^r(\T^3))$ of space-time functions $f(t,x)$.
In the computation, we may even write $L^q (I; L^r(\T^3)) =L_t^q L_x^r (I \times \T^3)$
and $L^q([0,T]; L^r(\T^3)) = L_T^qL_x^r$ for $T>0$.

We fix a non-negative, even function $\psi\in C_0^\infty((-2,2))$ with $\psi(s)=1$ for $|s|\leq 1$,
in order to define a partition of unity:
for a dyadic number $N\geq 1$,
we set
\begin{equation*}
    \psi_N(\zeta)=\psi\Big(\frac{|\zeta|}{N}\Big)-\psi\Big(\frac{2|\zeta|}{N}\Big) \,
    \quad \text{for } N\geq 2,
\end{equation*}

\noi
and $  \psi_1(\zeta)=\psi(|\zeta|)$.
We define the frequency localization operators
$P_N$ as the Fourier multiplier with symbol $\psi_N$.
More generally, for a set $S\subset \Z^{3}$,
we define $P_S$ as the Fourier projection operator with symbol $\ind_S$, where $\ind_S$ denotes the characteristic function of $S$.

Let $s \in \R$.
We denote the Fourier multiplier with the symbol $\jb{\zeta}^s$ as $\jb{\nb}^s$.
For $1 \le q \le \infty$,
we define the Sobolev space $W^{s,q}(\T^3)$ as the space of all periodic distributions $f$ for which the norm
\[
\| f \|_{W^{s,q}}
:= \| \jb{\nb}^s f \|_{L^q}
< \infty.
\]
We write $H^s(\T^3) := W^{s,2}(\T^3)$.
The square function estimate (see \cite[Theorem 5 on page 104]{St70}, for example) yields that
\begin{equation}
\| f \|_{W^{s,q}}
\sim
\bigg\| \Big( \sum_{N \ge 1} N^{2s} |P_N f|^2 \Big)^{\frac 12} \bigg\|_{L^q}
\label{chrW}
\end{equation}
for $1<q<\infty$.

Given $1 <q,r<\infty$, $s \in \R$, and a time interval $I$,
we define
the space
$\l^2 L_t^q W_x^{s,r} (I \times \T^3)$
equipped with the norm
\[
\| f \|_{\l^2 L_t^q W_x^{s,r} (I \times \T^3)}
:=
\Big( \sum_{N \ge 1} N^{2s} \| P_N f\|_{L_t^q L_x^{r}(I \times \T^3)}^{2} \Big)^{\frac 12}
.
\]
Minkowski's integral inequality with \eqref{chrW} yields that
\begin{equation}
\| f \|_{L_t^q W_x^{s,r} (I \times \T^3)}
\les
\| f \|_{\l^2 L_t^q W_x^{s,r} (I \times \T^3)}
\label{sqLW}
\end{equation}
for $2 \le q,r < \infty$.

\subsection{Atomic spaces machinery}
Throughout this section let $H$ be a separable Hilbert space over $\mathbb C$.
We recall the well-known atomic spaces $U^{p}$
and $V^{p}$ machinery in this subsection.
This technique has made great progress in treating critical problems in recent years; 
for instance 
see those development (definition to the atomic spaces and related properties)
in \cite{HHK09, HTT11, HTT14, IP12, KT05}.

Let $\mathcal{Z}$ be the set of finite partitions
$-\infty<t_0<t_1< \dots <t_K\leq \infty$ of the real line. If
$t_K=\infty$, we use the convention that $v(t_K):=0$ for all functions
$v: \R \to H$.
Then, we give the same definition of $U^p$ and $V^p$ spaces as in \cite{HHK09}.

\begin{definition}\label{def:u}
\rm
Let $1\leq p <\infty$. For $\{t_k\}_{k=0}^K \in \mathcal{Z}$ and
$\{\phi_k\}_{k=0}^{K-1} \subset H$ with
$\sum_{k=0}^{K-1}\|\phi_k\|_{H}^p=1$.
We call the piecewise
defined function $a:\R \to H$ such that
\begin{equation*}
    a(t) =\sum_{k=1}^K\ind_{[t_{k-1},t_k)} (t) \phi_{k-1}
\end{equation*}

\noi
a $U^p$-atom,
 and then we define the atomic space $U^p(\R;H)$ of all
functions $u: \R \to H$ such that
\begin{equation*}
    u=\sum_{j=1}^\infty \lambda_j a_j \; 
 \quad 
 \text{  for  }\,
  U^p\text{-atoms } a_j \;
  \quad \text{ and } \quad
    \{\ld_j\}_{j \in \N} \in \l^1(\N; \C),
\end{equation*}

\noi
with the norm
\[
    \|u\|_{U^p(\R; H)} :=\inf \Big\{\sum_{j=1}^\infty |\lambda_j|
      :\; u=\sum_{j=1}^\infty \lambda_j a_j,
      \, \{ \lambda_j \}_{j \in \N} \in \l^1 (\N; \C) , \; a_j \, \text{is  $U^p$-atom}\Big\}.
\]
Here, $\l^1(\N; \C)$ denotes the space of all absolutely summable $\C$-valued sequences.
\end{definition}

\begin{definition}\label{def:v}
\rm
Let $1\leq p<\infty$.
We define $V^p(\R;H)$ as the space of all functions $v:\R\to H$ such that
\begin{equation}\label{eq:norm_v}
    \|v\|_{V^p(\R; H)}^p
    :=\sup_{\{t_k\}_{k=0}^K \in \mathcal{Z}} \sum_{k=1}^{K}
    \|v(t_{k})-v(t_{k-1})\|_{H}^p
\end{equation}
is finite\footnote{Notice that here we use the convention $v(\infty)=0$}.
Furthermore, let $V^p_{rc}(\R;H)$ denote the closed subspace of all right-continuous functions $v:\R\to H$ such that $\lim_{t\to -\infty}v(t)=0$, endowed with the same norm \eqref{eq:norm_v}.

\end{definition}

It is worth mentioning here that all the spaces defined in Definitions \ref{def:u} and \ref{def:v} are 
Banach spaces, and
embedded into   $L^\infty(\R;H)$.
Moreover, for  $1\leq p<q <\infty$,
we have $V^p_{rc}(\R;H) \hookrightarrow U^q(\R;H)$.
We refer to \cite{HHK09} for details.

Corresponding to the linear Schr\"odinger flow,
we define the space-time spaces as follows.

\begin{definition}\label{def:delta_norm}
\rm
For $s \in \R$, we let $U^p_\Delta H^s $ resp. $V^p_\Delta H^s$ 
be the spaces of all functions $u:\R\to H^s(\T^3)$ such that $t \mapsto e^{-it \Delta}u(t)$ is in $U^p(\R;H^s(\T^3))$ resp. $V^p(\R;H^s(\T^3))$, with norms
\[
    \| u\|_{U^p_\Delta H^s} = \| e^{-it \Delta} u\|_{U^p(\R;H^s(\T^3))},
    \qquad 
    \| u\|_{V^p_\Delta H^s} = \| e^{-it \Delta} u\|_{V^p(\R;H^s(\T^3))}.
\]
\end{definition}

Next, we also define spaces that only have a small variation of the above.

\begin{definition}\label{def:xt}
\rm
(i)
For $s \in \R$, we define $\ft X^{s}$ as the space of all functions $u: \R \to H^s(\T^3)$ 
such that, for every $\zeta \in \Z^3$, the map $t \mapsto e^{it|\zeta|^2} \ft{u}(t)(\zeta)$ is in $U^2(\R,\C)$, and for which the norm
\[
    \|u\|^2_{\ft X^{s}}:=\sum_{\zeta \in \Z^3} \jb{ \zeta } ^{2s}
    \|e^{it|\zeta|^2}\ft{u}(t)(\zeta)\|_{U^2_t(\R; \C)}^2<\infty.
\]

\noi
We define a Banach subspace of continuous functions in  $\ft X^{s}$
to be $X^s=\ft X^{s} \cap C(\R;H^s(\T^3))$.

\smallskip
(ii)
For $s \in \R$, we define $\ft Y^{s}$ as the space of all functions $u:\R \to H^s(\T^3)$ such that,
for every 
$\zeta\in \Z^3$, the map $t \mapsto e^{it|\zeta|^2}\ft u(t) (\zeta)$ is in $V^2_{rc}(\R,\C)$, and for which the norm
\[
    \|u\|^2_{\ft Y^{s}}
    :=\sum_{\zeta\in \Z^3}
    \jb{ \zeta } ^{2s} \|e^{it|\zeta|^2}\ft{u}(t)(\zeta)\|_{V^2_t (\R; \C)}^2 <\infty.
\]

\noi
Similarly, we define $Y^s=\ft Y^{s} \cap C(\R;H^s(\T^3))$.

\end{definition}

We note that functions in $\ft X^{s} $ and $\ft Y^{s} $
are not necessarily continuous, 
such functions are, 
 of course, in $L^\infty (\R; H^s (\T^3))$.
 In particular,
the following embeddings hold. See \cite{HTT11} for details.

\begin{proposition}\label{Prop:emb}
For $s \in \R$,
we have
\[
   U^2_\Delta H^s \hookrightarrow \ft X^s \hookrightarrow \ft Y^s
  \hookrightarrow V^2_\Delta H^s
  \hookrightarrow L^\infty (\R; H^s (\T^3)).
\]
Moreover, for  $1\leq p<q <\infty$,
we have $V^p_{\Dl} H^s \hookrightarrow U^q_{\Dl}H^s$.

\end{proposition}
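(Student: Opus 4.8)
The plan is to push everything to the Fourier side and exploit that the Schr\"odinger group $e^{-it\Delta}$ is unitary on $H^s(\T^3)$. Writing $w(t) := e^{-it\Delta}u(t)$, Definition~\ref{def:delta_norm} gives $\|u\|_{U^2_\Delta H^s} = \|w\|_{U^2(\R;H^s)}$ and $\|u\|_{V^2_\Delta H^s} = \|w\|_{V^2(\R;H^s)}$, while the $\xi$-th Fourier coefficient of $w$ is exactly $w_\xi(t) := e^{it|\xi|^2}\ft u(t)(\xi)$, so that by Definitions~\ref{def:xt} and \ref{def:yt} one has $\|u\|_{X^s}^2 = \sum_\xi \jb{\xi}^{2s}\|w_\xi\|_{U^2(\R;\C)}^2$ and $\|u\|_{Y^s}^2 = \sum_\xi \jb{\xi}^{2s}\|w_\xi\|_{V^2(\R;\C)}^2$. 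Thus all four norms in the chain are built from the single object $w$: two of them as genuine $H^s$-valued $U^2/V^2$ norms, and two as $\ell^2_\xi$-combinations of the scalar $U^2/V^2$ norms of its Fourier components. The whole chain then reduces to interchanging the $\ell^2_\xi$ summation with the time-variation structure, together with the scalar embeddings recalled in \cite{HHK09}.

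For $U^2_\Delta H^s \hookrightarrow X^s$ I would argue on atoms and then use subadditivity. Since $\jb{\xi}^s\|(\cdot)_\xi\|_{U^2(\R;\C)}$ is a seminorm for each $\xi$, their $\ell^2_\xi$-combination $\|\cdot\|_{X^s}$ is a genuine norm (Minkowski in $\ell^2_\xi$). For a $U^2$-atom $a = \sum_k \ind_{[t_{k-1},t_k)}\phi_{k-1}$ with $\sum_k \|\phi_k\|_{H^s}^2 = 1$, each Fourier component $a_\xi = \sum_k \ind_{[t_{k-1},t_k)}(\phi_{k-1})_\xi$ is a scalar $U^2$-atom scaled by $c_\xi := (\sum_k |(\phi_k)_\xi|^2)^{1/2}$, whence $\|a_\xi\|_{U^2(\R;\C)} \le c_\xi$. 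Summing and exchanging the two nonnegative sums by Tonelli gives $\|a\|_{X^s}^2 \le \sum_\xi \jb{\xi}^{2s} c_\xi^2 = \sum_k \|\phi_k\|_{H^s}^2 = 1$. Expanding an arbitrary $u$ into atoms, applying the triangle inequality in $X^s$, and taking the infimum over atomic decompositions then yields $\|u\|_{X^s} \le \|u\|_{U^2_\Delta H^s}$.

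The remaining three links are softer. For $X^s \hookrightarrow Y^s$ I would apply, component by component, the scalar embedding $U^2(\R;\C)\hookrightarrow V^2_{rc}(\R;\C)$ with uniform constant (cf.\ \cite{HHK09}), which also certifies right-continuity and vanishing at $-\infty$ of each $w_\xi$; squaring and summing against $\jb{\xi}^{2s}$ gives $\|u\|_{Y^s}\les \|u\|_{X^s}$. For $Y^s \hookrightarrow V^2_\Delta H^s$ I would note that, for any fixed partition $\{t_k\}$, Plancherel turns $\sum_k \|w(t_k)-w(t_{k-1})\|_{H^s}^2$ into $\sum_\xi \jb{\xi}^{2s}\sum_k |w_\xi(t_k)-w_\xi(t_{k-1})|^2$, and the inner sum is bounded by $\|w_\xi\|_{V^2(\R;\C)}^2$; taking the supremum over partitions then gives $\|w\|_{V^2(\R;H^s)}^2 \le \|u\|_{Y^s}^2$ directly. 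Finally, $V^2_\Delta H^s \hookrightarrow L^\infty(\R;H^s)$ follows from the two-point partition $\{t,\infty\}$ with the convention $w(\infty)=0$, giving $\|w(t)\|_{H^s}\le \|w\|_{V^2(\R;H^s)}$ and hence, by unitarity, $\sup_t\|u(t)\|_{H^s}\le\|u\|_{V^2_\Delta H^s}$.

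For the ``moreover'' statement I would again transfer through $w=e^{-it\Delta}u$ and invoke the embedding $V^p_{rc}(\R;H)\hookrightarrow U^q(\R;H)$ for $1\le p<q<\infty$ recalled after Definition~\ref{def:v} and proved in \cite{HHK09}; applied with $H=H^s(\T^3)$ it yields $V^p_\Delta H^s \hookrightarrow U^q_\Delta H^s$. This last input is the only genuinely hard one, since its proof decomposes a finite-$p$-variation function into a summable series of $U^q$-atoms via a stopping-time/dyadic selection of its jumps, and I would take it from \cite{HHK09} rather than reproduce it. Everything else is an exercise in interchanging the $\ell^2_\xi$ sum with the time-variation structure; the one asymmetry to keep in mind is that for $U^2$ this interchange must be routed through atoms and subadditivity (Minkowski's inequality runs only in the favorable direction), whereas for $V^2$ it is a direct reordering of nonnegative double sums.
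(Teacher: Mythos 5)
Your proposal is correct, and it coincides with the proof the paper implicitly relies on: the paper states Proposition \ref{Prop:emb} without proof, quoting it from \cite{HTT11} (Proposition 2.8) and \cite{HHK09}, and your argument --- atoms plus subadditivity for $U^2_\Delta H^s \hookrightarrow X^s$, the componentwise scalar embedding $U^2 \hookrightarrow V^2_{rc}$ for $X^s \hookrightarrow Y^s$, the interchange of the $\ell^2_\xi$ sum with the partition sum for $Y^s \hookrightarrow V^2_\Delta H^s$, the two-point partition $\{t,\infty\}$ for the $L^\infty$ bound, and the cited $V^p_{rc} \hookrightarrow U^q$ embedding for the last claim --- is exactly the argument given in those references. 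The only points glossed over (the countable triangle inequality in $X^s$, which needs completeness of $X^s$ or a Fatou-type property, and the fact that the final embedding really requires the right-continuous subspace $V^p_{rc}$, a convention the paper itself elides) are glossed over in the original sources as well.
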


As an immediate consequence of the above embedding results, 
we have the following result.

\begin{corollary}\label{est-Ys}
Let $\Z^3 = \bigcup_{k \in \N} C_k$ be a partition of $\Z^3$.
For $s \in \R$,
we have
\[
   \sum_{k \in \N} \| P_{C_k} u\|_{V^2_\Delta H^s}^2 \les   \| u\|^2_{Y^s}.
\]
\end{corollary}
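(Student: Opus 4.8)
The plan is to derive the estimate directly from the embedding $Y^s \hookrightarrow V^2_\Delta H^s$ in Proposition \ref{Prop:emb}, exploiting the fact that the $Y^s$-norm is \emph{diagonal in frequency}: it is an $\ell^2$-sum, weighted by $\jb{\xi}^{2s}$, of the $V^2_t$-norms of the individual modulated Fourier coefficients $t \mapsto e^{it|\xi|^2}\ft u(t)(\xi)$. First I would apply that embedding not to $u$ itself but to each frequency-localized piece $P_{C_k} u$, which yields, with a constant independent of $k$,
\[
\| P_{C_k} u \|_{V^2_\Delta H^s}^2 \les \| P_{C_k} u \|_{Y^s}^2.
\]

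Next I would compute $\| P_{C_k} u \|_{Y^s}^2$ explicitly. Since $\widehat{P_{C_k} u}(t)(\xi) = \ind_{C_k}(\xi)\,\ft u(t)(\xi)$, the modulated coefficient vanishes identically in $t$ for $\xi \notin C_k$ (and hence contributes zero $V^2_t$-norm under the convention $v(\infty)=0$), while for $\xi \in C_k$ it coincides with that of $u$. Therefore
\[
\| P_{C_k} u \|_{Y^s}^2 = \sum_{\xi \in C_k} \jb{\xi}^{2s} \| e^{it|\xi|^2}\ft u(t)(\xi) \|_{V^2_t(\R;\C)}^2,
\]
so the quantities $\| P_{C_k} u \|_{Y^s}^2$ are exactly the blocks of $\| u \|_{Y^s}^2$ indexed by the sets $C_k$. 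Summing the previous display over $k$ and using that $\{C_k\}_{k\in\N}$ is a partition of $\Z^3$, the double sum over $k$ and $\xi \in C_k$ collapses to a single sum over all $\xi \in \Z^3$, which is precisely $\| u \|_{Y^s}^2$; this gives the claim.

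The argument carries essentially no analytic obstacle — the content lies entirely in the observation that the $Y^s$-norm decomposes orthogonally (indeed with equality) across any frequency partition, together with the uniformity of the embedding constant of Proposition \ref{Prop:emb} over $k$. If one prefers to avoid invoking the embedding piecewise, the same conclusion follows by unfolding Definition \ref{def:v}: writing $w_k = e^{-it\Delta}P_{C_k}u$ and $g_\xi(t) = e^{it|\xi|^2}\ft u(t)(\xi)$, Plancherel gives $\| w_k(t_j) - w_k(t_{j-1}) \|_{H^s}^2 = \sum_{\xi \in C_k} \jb{\xi}^{2s} |g_\xi(t_j) - g_\xi(t_{j-1})|^2$ for every partition $\{t_j\}$; interchanging the finite time-sum with the frequency-sum and bounding the supremum of a sum of nonnegative terms by the sum of the suprema yields $\| P_{C_k} u \|_{V^2_\Delta H^s}^2 \le \sum_{\xi \in C_k} \jb{\xi}^{2s}\|g_\xi\|_{V^2_t(\R;\C)}^2$, after which the same partition-collapse finishes the proof with constant $1$. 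The only point deserving a moment's care in either route is precisely that a frequency mode absent from $C_k$ contributes the zero function, whose $V^2_t$-norm vanishes.
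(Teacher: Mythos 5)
Your proof is correct and follows essentially the same route as the paper, which presents this corollary as an immediate consequence of Proposition \ref{Prop:emb}: one applies the embedding $Y^s \hookrightarrow V^2_\Delta H^s$ to each block $P_{C_k}u$ (with a constant uniform in $k$) and uses that the $Y^s$-norm, being a frequency-diagonal $\ell^2$-sum, splits exactly across the partition $\Z^3 = \bigcup_k C_k$. Your fleshed-out details, including the Plancherel-based variant, are a fine elaboration of precisely this argument.
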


For $s \in \R$ and $I\subset \R$,
we define the  restriction norms $X^s(I)$ in the following way:
\begin{equation}
\begin{aligned}
X^s(I)
&:= \{u\in C(I; H^s (\T^3)):
    \ u = v|_I \ \text{ for some } v \in X^s \},
\\
\| u \|_{X^s(I)}
&:=
\inf \{ \| v \|_{X^s} :
\ v \in X^s \text{ such that } u= v|_I \}
,
\end{aligned}
\label{XI}
\end{equation}
where $v|_I$ denotes the restriction of $v$ to $I$.
The space $Y^s(I)$ is defined in a similar way. 
We referee to \cite[Appendix A]{BOP15} to the additional
properties of the $X^s(I)$-spaces.

\subsection{Linear and Strichartz estimates}

The following linear estimate follows immediately from the atomic structure of $U^2$.

\begin{proposition}\label{Prop:linear}
  Let $s\geq 0$, $T>0$, and $\wt\phi \in H^s(\T^3)$. Then, for
  the linear solution $u(t) := e^{it\Delta} \wt\phi$ 
  for $t \in \R$, we have
  $u \in X^s([0,T))$ and
\[
    \|u\|_{X^s([0,T))}\leq \| \wt\phi\|_{H^s}.
\]
\end{proposition}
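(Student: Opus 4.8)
The plan is to exploit the restriction structure of the space $X^s([0,T))$ defined in \eqref{XI}, together with the fact that the linear flow trivializes each Fourier mode of $u$ after the modulation $e^{it|\xi|^2}$. First I would compute the Fourier coefficients of $u(t)=e^{it\Delta}\wt\phi$: since $e^{it\Delta}$ acts as the Fourier multiplier $e^{-it|\xi|^2}$, one has $\ft u(t)(\xi)=e^{-it|\xi|^2}\ft{\wt\phi}(\xi)$, and therefore
\[
e^{it|\xi|^2}\ft u(t)(\xi)=\ft{\wt\phi}(\xi)
\]
is \emph{constant} in $t$ for each fixed $\xi\in\Z^3$.

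The subtlety, and the only real obstacle, is that a nonzero constant function does not belong to $U^2(\R;\C)$, since every $U^2$ function must vanish as $t\to-\infty$; hence one cannot simply take $u$ itself as a global representative. Instead I would use the infimum defining $\|\cdot\|_{X^s([0,T))}$ and choose the truncated extension
\[
v(t):=\ind_{[0,T)}(t)\,e^{it\Delta}\wt\phi,
\]
which satisfies $v|_{[0,T)}=u|_{[0,T)}$. For this $v$ the relevant modulated Fourier mode becomes $e^{it|\xi|^2}\ft v(t)(\xi)=\ind_{[0,T)}(t)\,\ft{\wt\phi}(\xi)$, a single step function.

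The key observation is then that, for each $\xi$, the function $t\mapsto\ind_{[0,T)}(t)\,\ft{\wt\phi}(\xi)$ is exactly $|\ft{\wt\phi}(\xi)|$ times a $U^2$-atom: taking the partition $t_0=0<t_1=T$ and the single coefficient $\phi_0=\ft{\wt\phi}(\xi)/|\ft{\wt\phi}(\xi)|$, which has modulus $1$ in $H=\C$, in Definition \ref{def:u} produces precisely this function up to the scalar factor $|\ft{\wt\phi}(\xi)|$ (the case $\ft{\wt\phi}(\xi)=0$ being trivial). Consequently
\[
\|e^{it|\xi|^2}\ft v(t)(\xi)\|_{U^2_t(\R;\C)}\le |\ft{\wt\phi}(\xi)|.
\]

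Finally, squaring, weighting by $\jb{\xi}^{2s}$, and summing over $\xi\in\Z^3$ gives $\|v\|_{X^s}^2\le\sum_{\xi\in\Z^3}\jb{\xi}^{2s}|\ft{\wt\phi}(\xi)|^2=\|\wt\phi\|_{H^s}^2$. Since $v$ is an admissible extension of $u$ from $[0,T)$, the definition of the restriction norm yields $\|u\|_{X^s([0,T))}\le\|v\|_{X^s}\le\|\wt\phi\|_{H^s}$, which is the claimed bound. No genuine estimate is required beyond recognizing the truncated linear flow as a single atom in each frequency; the hypothesis $s\ge 0$ plays no role here and the argument is valid for all $s\in\R$.
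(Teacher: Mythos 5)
Your proof is correct and is exactly the argument the paper has in mind: the paper gives no details, saying only that the estimate ``follows immediately from the atomic structure of $U^2$,'' and your observation that the truncated flow $\ind_{[0,T)}(t)e^{it\Delta}\wt\phi$ has each modulated Fourier mode equal to $|\ft{\wt\phi}(\xi)|$ times a single $U^2$-atom is precisely that atomic-structure argument, including the genuine subtlety that the untruncated constant mode fails to lie in $U^2$. The frequency-by-frequency atom construction you use is equivalent to the standard route of viewing $\ind_{[0,T)}(t)\wt\phi$ as one $H^s$-valued atom in $U^2_\Delta H^s$ and invoking the embedding $U^2_\Delta H^s \hookrightarrow X^s$ of Proposition \ref{Prop:emb}.
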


Define the norm of the inhomogeneous term on an interval $I= (a,b)$ by
\begin{equation}
\| f \|_{N^s(I)}
:=
\bigg\| \int_a^t S(t-t') f(t') dt' \bigg\|_{X^s(I)}
\label{NsI}
\end{equation}
for $s \in \R$.
We have the following linear estimate for the Duhamel term. 
See \cite[Proposition 2.11]{HTT11}

\begin{proposition}\label{Prop:inhom}
Let $s\geq 0$ and let $I$ be an interval.
For $f\in L^1(I;H^s(\T^3))$, we have
\[
\| f \|_{N^s(I)}
\leq
\sup
_{\substack{ v \in Y^{-s}(I) \\ \|v\|_{Y^{-s}(I)}=1}}
    \left|\int_I \int_{\T^3}f(t,x)\overline{v(t,x)}dxdt\right|.
\]
\end{proposition}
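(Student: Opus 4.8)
The plan is to prove the bound by duality, reducing it to the duality between $U^2$ and $V^2$ from \cite{HHK09} applied at the level of each spatial Fourier mode. Write $w(t) := \int_a^t S(t-t')f(t')\,dt'$, so that $\|f\|_{N^s(I)} = \|w\|_{X^s(I)}$. Since $f \in L^1(I;H^s)$, the object $w$ is the Duhamel term associated with $f\ind_I$; I would work on all of $\R$ with this zero-extension and only at the end restrict back to $I$, which is exactly what turns the underlying equality into the inequality stated.

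First I would pass to the Fourier side adapted to the flow. Peeling off the free evolution gives $e^{-it\Delta}w(t) = \int_a^t e^{-it'\Delta}f(t')\,dt'$, so for each $\xi\in\Z^3$ the quantity controlling the $X^s$-norm,
\[
G_\xi(t) := e^{it|\xi|^2}\ft w(t)(\xi) = \int_a^t e^{it'|\xi|^2}\ft f(t')(\xi)\,dt',
\]
is absolutely continuous with $G_\xi'(t) = e^{it|\xi|^2}\ft f(t)(\xi)$ on $I$ and $G_\xi(a)=0$; by Definition \ref{def:xt}, $\|w\|_{X^s}^2 = \sum_\xi \jb{\xi}^{2s}\|G_\xi\|_{U^2}^2$. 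For each $\xi$ the $(U^2,V^2)$-duality of \cite{HHK09} gives $\|G_\xi\|_{U^2} = \sup_{\|H_\xi\|_{V^2}\le1}|B(G_\xi,H_\xi)|$ with $B(G,H) := \int_\R G'(t)\overline{H(t)}\,dt$, the integral representation of $B$ being valid since $G_\xi$ is absolutely continuous. Given $v\in Y^{-s}$ with modes $H_\xi(t):=e^{it|\xi|^2}\ft v(t)(\xi)\in V^2_{rc}$, the phases cancel, so $B(G_\xi,H_\xi) = \int_I \ft f(t)(\xi)\overline{\ft v(t)(\xi)}\,dt$; summing in $\xi$ and applying Parseval on $\T^3$ yields the exact identity
\[
\sum_\xi B(G_\xi,H_\xi) = \int_I\int_{\T^3} f(t,x)\overline{v(t,x)}\,dx\,dt.
\]

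It then remains to assemble the two sums. The mode-wise bound $|B(G_\xi,H_\xi)|\le\|G_\xi\|_{U^2}\|H_\xi\|_{V^2}$ together with the Cauchy--Schwarz inequality in $\xi$ against the weights $\jb{\xi}^{\pm s}$ identifies $(\sum_\xi\jb{\xi}^{-2s}\|H_\xi\|_{V^2}^2)^{1/2}$ with $\|v\|_{Y^{-s}}$; choosing near-optimal $H_\xi$ in each mode and normalizing the coefficients recovers $\|w\|_{X^s}$ as $\sup_{\|v\|_{Y^{-s}}\le1}|\langle w,v\rangle|$, which by the identity above equals $\sup_{\|v\|_{Y^{-s}}\le1}|\int_I\int_{\T^3}f\overline v\,dx\,dt|$. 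Restricting from $\R$ back to $I$ then gives the claimed inequality.

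I expect the main obstacle to be the bookkeeping around the duality rather than any single hard estimate: one must legitimately invoke the representation theorem $(U^2)^*\cong V^2$ from \cite{HHK09}, reconcile $V^2$ with the right-continuous subspace $V^2_{rc}$ appearing in the definition of $Y^{-s}$, and justify the integration-by-parts form of $B$, which rests precisely on the absolute continuity of the Duhamel integral guaranteed by $f\in L^1(I;H^s)$.
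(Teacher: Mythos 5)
Your proposal is correct and follows essentially the same route as the paper, which proves this statement by deferring to \cite[Proposition 2.11]{HTT11}: that proof is precisely your mode-by-mode reduction to the $U^2$--$V^2$ duality of \cite{HHK09} via the integration-by-parts form of the pairing (legitimate here since $f\in L^1(I;H^s)$ makes each $G_\xi$ absolutely continuous), followed by the Cauchy--Schwarz assembly over frequencies and restriction back to $I$. The technical caveats you flag (right-continuous representatives for $V^2_{rc}$ and the extension/restriction bookkeeping that turns the equality into an inequality) are exactly the points handled in \cite{HHK09, HTT11}, so no gap remains.
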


Next, we recall some estimates, which are similar to Bourgain's $L^p$ estimates of Strichartz type. Let $\mathcal{C}_N$ denote the collection of cubes $C\subset \Z^3$ of side-length $N\geq 1$ with arbitrary center and orientation. Then, we give the following estimates, and one can find the proofs in \cite[Corollary 3.2]{HTT11}.

\begin{corollary}
\label{Coro:str cubes}
Let $p>4$ and an interval $I$ with $|I| \le 1$.
For all dyadic numbers $N\geq 1$ and $C\in\mathcal{C}_N$, we have
\[
\|P_C u\|_{L^p(I\times\T^3)}
\les N^{\frac{3}{2}-\frac{5}{p}}\|P_C u\|_{U_\Delta^p L^2}.
\]
\end{corollary}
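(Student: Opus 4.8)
The plan is to deduce the estimate from the corresponding bound for the free Schr\"odinger evolution and then to transfer it to $U^p_\Delta L^2$ using the atomic structure of $U^p$. The one analytic input I need is the Bourgain-type periodic Strichartz estimate: for $p>4$, any dyadic $N\ge1$, any interval $I$ with $|I|\le1$, and any $f\in L^2(\T^3)$,
\begin{equation}
\|P_C e^{it\Delta}f\|_{L^p(I\times\T^3)}\les N^{\frac32-\frac5p}\|P_C f\|_{L^2}.
\label{freestr}
\end{equation}
Granting \eqref{freestr}, it suffices to prove the bound $\|P_C v\|_{L^p(I\times\T^3)}\les N^{\frac32-\frac5p}\|v\|_{U^p_\Delta L^2}$ for every $v$; applying this to $v=P_C u$ and using $P_C^2=P_C$ reproduces the norm $\|P_C u\|_{U^p_\Delta L^2}$ on the right-hand side.

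To obtain \eqref{freestr} I would first reduce to a cube centered at the origin. A cube $C\in\mathcal{C}_N$ of side-length $N$ (of arbitrary center and orientation) lies inside a ball $\{|\xi-\xi_0|\les N\}$ about its center $\xi_0$. Choosing $n_0\in\Z^3$ with $|\xi_0-n_0|\les1$ and modulating the data by $e^{in_0\cdot x}$ --- the Galilean boost by $n_0$, which shifts the frequency support by $n_0$ and transforms the free solution by a unimodular factor together with a measure-preserving space-time shear, hence preserves both $\|\cdot\|_{L^p(I\times\T^3)}$ and $\|\cdot\|_{L^2}$ --- reduces matters to data with frequencies in $\{|\xi|\les N\}$. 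For such data \eqref{freestr} is precisely the frequency-localized periodic Strichartz estimate of Bourgain (see \cite{HTT11}) in the range $p>4$, which rests on genuine harmonic analysis on $\T^3$ (exponential-sum / lattice-point counting, equivalently $\ell^2$-decoupling). This is the one hard ingredient and the main obstacle; I would cite it rather than reprove it.

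The transfer to the atomic space is then soft. By Definition \ref{def:u} it is enough to bound a single $U^p_\Delta L^2$-atom $a$, that is, $a(t)=e^{it\Delta}\phi_{k-1}$ for $t\in[t_{k-1},t_k)$ with $\sum_k\|\phi_{k-1}\|_{L^2}^p=1$. Since $P_C$ commutes with $e^{it\Delta}$, on each subinterval $P_C a(t)=e^{it\Delta}P_C\phi_{k-1}$, and because the intervals $I\cap[t_{k-1},t_k)$ partition $I$, applying \eqref{freestr} on each piece (legitimate, as its constant depends only on $|I|\le1$) gives
\[
\|P_C a\|_{L^p(I\times\T^3)}^p=\sum_k\|e^{it\Delta}P_C\phi_{k-1}\|_{L^p((I\cap[t_{k-1},t_k))\times\T^3)}^p\les N^{p(\frac32-\frac5p)}\sum_k\|\phi_{k-1}\|_{L^2}^p=N^{p(\frac32-\frac5p)}.
\]
Thus $\|P_C a\|_{L^p}\les N^{\frac32-\frac5p}$ uniformly in atoms. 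For $v=\sum_j\lambda_j a_j$ the triangle inequality in $L^p$ gives $\|P_C v\|_{L^p}\les N^{\frac32-\frac5p}\sum_j|\lambda_j|$, and taking the infimum over all atomic decompositions of $v$ yields the desired bound, completing the proof modulo \eqref{freestr}.
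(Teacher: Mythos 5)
Your proposal is correct, and it is essentially the argument the paper relies on: the paper does not prove this corollary itself but cites \cite[Corollary 3.2]{HTT11}, whose proof is exactly your combination of the Bourgain-type frequency-localized Strichartz estimate, the Galilean-boost reduction to cubes centered near the origin, and the standard atomic transfer to $U^p_\Delta L^2$. The only point you gloss is the passage from atoms to general $u=\sum_j\lambda_j a_j$ (one should note the series converges in $L^\infty_t L^2_x$ so its $L^p$ limit agrees with $P_C u$), which is the routine transfer-principle step and not a gap.
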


Let $s \in \R$ and $I$ be an interval with $|I| \le 1$.
As in \cite{IP12},
we define the norm:
\begin{align}
\label{weak-norm}
\| u \|_{\wt X^s(I)} :=
\sum_{p \in \{ p_0, p_1 \} }
\sup_{\substack{J \subset I \\ |J| \le 1}}
\Big(
\sum_{N \ge 1}
N^{5+ (s- \frac 32)p} \| P_N u(t) \|^p_{L^p_{t,x} (J \times \T^3) }
\Big)^{\frac 1p},
\end{align}

\noi
where $p_0$ and $p_1$ are given by
\begin{equation}
p_0 := 4 + \frac 1{10},
\quad
p_1 := 100.
\label{p0a}
\end{equation}
As a consequence of Corollary \ref{Coro:str cubes}, we 
note that the $\wt X^s$-norm is weaker than the $X^s$-norm:
\begin{align}
\label{eq_wenorm}
\| u \|_{\wt X^s(I)} \les \| u\|_{X^s(I)}.
\end{align}

\noi
Since $X^s(I) \hookrightarrow L^\infty(I; H^s(\T^3))$,
the size of the $X^s(I)$-norm is never small as we shrink the time interval.
On the other hand,
by taking $|I| \ll 1$, we can make the $\wt X^s(I)$-norm small.

We also define
\begin{align}
\label{M-norm}
\| u \|_{M(I)} :=
\|u\|_{\wt X^1(I)}^{\frac 12} 
\|u\|_{X^1(I)}^{\frac 12}.
\end{align}

\noi
We have the following trilinear estimate.

\begin{proposition}{\cite[Proposition 3.1]{IP12}}
\label{Prop:uuu}
There exists $\delta>0$ such that, for any dyadic numbers $N_1\geq N_2 \geq N_3\geq
1$ and any interval $I$ with $|I|\leq 1$, we have
\[
\Big\| \prod_{j=1}^3P_{N_j} u_j \Big\|_{L^2(I\times \T^3)}\les
\Big( \frac{N_3}{N_1}+\frac{1}{N_2} \Big)^\delta
\|P_{N_1}u_1\|_{Y^0(I)}
\|P_{N_2}u_2 \|_{M(I)}\|P_{N_3}u_3 \|_{M(I)}.
\]

\end{proposition}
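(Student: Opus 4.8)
The plan is to work entirely within the atomic and Strichartz framework of this section, and to produce the $M$-norms only at the very end by a geometric-mean device. Since $\|\cdot\|_{M(I)}=\|\cdot\|_{\wt X^1(I)}^{1/2}\|\cdot\|_{X^1(I)}^{1/2}$ by \eqref{M-norm}, it suffices to prove the trilinear $L^2$ bound twice, with the two low factors $P_{N_2}u_2,P_{N_3}u_3$ measured in complementary ways, and then to split $\|\prod_{j}P_{N_j}u_j\|_{L^2}=\|\prod_{j}P_{N_j}u_j\|_{L^2}^{1/2}\|\prod_{j}P_{N_j}u_j\|_{L^2}^{1/2}$, bounding one half by each version; this recovers exactly the $M$-norm on the two low factors. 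Both versions are reduced to free evolutions $e^{it\Delta}\phi$ by the transfer principle of \cite{HHK09}, using the embeddings of Proposition \ref{Prop:emb} (in particular $Y^0(I)\hookrightarrow U^p_\Delta L^2$ and $X^1(I)\hookrightarrow U^p_\Delta H^1$ for $p>2$). Throughout I keep $N_1\ge N_2\ge N_3\ge1$ and $|I|\le1$.

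The common engine of both versions is a refined bilinear estimate pairing the highest shell with a lower one, and it is here that the gain is generated. For a cube $C\in\mathcal{C}_{N_2}$ of side $N_2$ inside the shell $|\xi|\sim N_1$, I would establish for free solutions a bound of the schematic shape
\[
\big\|(P_Cu_1)(P_{N_3}u_3)\big\|_{L^{q_0}(I\times\T^3)}\lesssim\Big(\frac{N_3}{N_1}+\frac{1}{N_2}\Big)^{\delta}\,N_3^{\mu}\,\|P_Cu_1\|_{Y^0(I)}\,\|P_{N_3}u_3\|_{Y^0(I)},
\]
together with its analogue with $N_3$ replaced by $N_2$; here $q_0>2$ is fixed by $\tfrac1{q_0}+\tfrac1{p_1}=\tfrac12$ and the power $N_3^{\mu}$ is dictated by scaling. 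I expect this count to be the main obstacle, since it is the only step where the arithmetic of the lattice $\Z^3$ enters rather than soft harmonic analysis: the gain superposes a transversality gain $\sim(N_3/N_1)^\delta$, available when the two shells are well separated, with a genuine arithmetic gain $\sim N^{-\delta}$ in the resonant regime $N_1\sim N_2\sim N_3$, coming from the divisor/representation bound for the number of lattice points on a thickened paraboloid cap. The loss of the endpoint in such counting bounds is exactly why $p_0$ in \eqref{p0a} must be taken strictly above the critical exponent $4$.

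With the bilinear gain available, I would globalize it through the cube decomposition $P_{N_1}u_1=\sum_{C\in\mathcal{C}_{N_2}}P_Cu_1$ at scale $N_2$. Because the Fourier support of $(P_Cu_1)(P_{N_2}u_2)(P_{N_3}u_3)$ lies in a fixed dilate of $C$, these pieces are finitely overlapping, and Plancherel yields the almost-orthogonal bound
\[
\Big\|\prod_{j=1}^3P_{N_j}u_j\Big\|_{L^2}^2\lesssim\sum_{C\in\mathcal{C}_{N_2}}\big\|(P_Cu_1)(P_{N_2}u_2)(P_{N_3}u_3)\big\|_{L^2}^2 .
\]
On each cube I split by H\"older with exponents $(q_0,p_1)$, controlling the bilinearly paired factor in $L^{q_0}$ by the estimate above and placing the remaining low factor in the high-exponent Strichartz space $L^{p_1}$; the decisive point is that, the cube $C$ having side only $N_2$, the single-scale estimate of Corollary \ref{Coro:str cubes} applied to $P_Cu_1$ costs a power of $N_2$ rather than of $N_1$. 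Summing the squares over $C$ and invoking the corollary to Proposition \ref{Prop:emb}, namely $\sum_{C}\|P_Cu_1\|_{V^2_\Delta L^2}^2\lesssim\|u_1\|_{Y^0(I)}^2$, collapses the cube sum back to the single factor $\|P_{N_1}u_1\|_{Y^0(I)}$, while the single-frequency norms of the two low factors are rewritten, up to the scaling weights $\jb{N_2},\jb{N_3}$, as their $H^1$-based norms.

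The first run just described pairs the extreme frequencies $(N_1,N_3)$ bilinearly—producing the crucial factor $(N_3/N_1+1/N_2)^\delta$—and places $P_{N_2}u_2$ in the weak Strichartz, hence (via \eqref{eq_wenorm}) $\wt X^1$, norm, so that $P_{N_2}u_2$ is measured in $\wt X^1$ and $P_{N_3}u_3$ in $X^1$. Running the same scheme with the pair $(N_1,N_2)$ bilinear and $P_{N_3}u_3$ in the weak norm gives a companion bound with gain $(N_2/N_1+1/N_2)^\delta\lesssim 1$, now with $P_{N_2}u_2$ in $X^1$ and $P_{N_3}u_3$ in $\wt X^1$. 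Forming the geometric mean $\|\prod_{j}P_{N_j}u_j\|_{L^2}=\|\prod_{j}P_{N_j}u_j\|_{L^2}^{1/2}\|\prod_{j}P_{N_j}u_j\|_{L^2}^{1/2}$ and recalling \eqref{M-norm}, each of $P_{N_2}u_2,P_{N_3}u_3$ is then measured once in $\wt X^1$ and once in $X^1$, assembling precisely $\|P_{N_2}u_2\|_{M(I)}\|P_{N_3}u_3\|_{M(I)}$, while the two gains multiply to a power of $(N_3/N_1+1/N_2)$ after a harmless relabeling of $\delta$. The dyadic summations implicit in the $Y^0(I)$-, $X^1(I)$- and $\wt X^1(I)$-norms finally cause no trouble, since that gain remains summable over the leftover dyadic parameters once a small power is traded against $\delta$.
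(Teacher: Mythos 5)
The paper gives no proof of Proposition \ref{Prop:uuu}: it is quoted verbatim from \cite[Proposition 3.1]{IP12}, and the proof there is a short interpolation. One takes the geometric mean of (a) the all-$Y^0$ trilinear estimate of \cite[Proposition 3.5]{HTT11}, which already carries the \emph{full} factor $(N_3/N_1+1/N_2)^{\delta}$ at the price of measuring the two low factors in $Y^0(I)$ with weights $N_2N_3$, and (b) a gain-free H\"older/cube-Strichartz bound placing the two low factors in the weak $\wt X^1$-norm (essentially the argument of Proposition \ref{Prop:uff}). Your outer scaffolding — the geometric-mean device to manufacture the $M(I)$-norms, the cube decomposition at scale $N_2$ with almost orthogonality, Corollary \ref{Coro:str cubes} on each cube, and the $\ell^2$-summation $\sum_{C}\|P_Cu_1\|^2_{V^2_\Delta L^2}\les \|u_1\|^2_{Y^0(I)}$ — coincides with this scheme and is fine.

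Everything therefore hinges on your ``common engine'', and that is where there is a genuine gap: the cube-localized bilinear estimate is not only left unproved (you defer exactly the lattice-point counting step, which is the entire mathematical content of the proposition; the rest is bookkeeping), it is false as stated. Test it on single-frequency free solutions: $u_1(t,x)=e^{i(\xi_0\cdot x-|\xi_0|^2t)}$ with $\xi_0\in C$, $|\xi_0|\sim N_1$, and $u_3\equiv 1$, so $N_3=1$. Then $\|P_Cu_1\|_{Y^0(I)}\sim\|P_{N_3}u_3\|_{Y^0(I)}\sim 1$, while
\[
\|(P_Cu_1)(P_{N_3}u_3)\|_{L^{q_0}(I\times\T^3)}\sim 1,
\qquad
\Big(\frac{N_3}{N_1}+\frac{1}{N_2}\Big)^{\delta}N_3^{\mu}\,\|P_Cu_1\|_{Y^0(I)}\|P_{N_3}u_3\|_{Y^0(I)}
\sim\Big(\frac{1}{N_1}+\frac{1}{N_2}\Big)^{\delta}\longrightarrow 0
\]
as $N_1,N_2\to\infty$. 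The obstruction is structural: no bilinear estimate for the pair $(u_1,u_3)$ can contain the factor $1/N_2$, because $N_2$ is the frequency of a function absent from that product, and localizing $u_1$ to a cube of side $N_2$ does not help (the counterexample is already supported in a single cube). In fact no single bilinear pairing generates the stated gain at all: in the regime $N_1=N_2=N\gg N_3=1$ the target factor is $N^{-\delta}$, yet the correct bilinear gains, $(N_2/N_1+1/N_2)^{\delta}$ for the pair $(u_1,u_2)$ and $(N_3/N_1+1/N_3)^{\delta}$ for $(u_1,u_3)$, are both $\sim 1$ there. What the actual proofs do is multiply \emph{two} independent gains: a counting/bilinear gain attached to a suitable pair, and a Strichartz gain of size $(N_3/N_2)^{1/2-}$ which the cube-decomposed H\"older step produces because the lowest factor costs only powers of $N_3$; one then concludes via an inequality such as $(N_2/N_1+1/N_2)\cdot(N_3/N_2)\le N_3/N_1+1/N_2$. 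Your proposal contains only the first mechanism, attaches it to the wrong pair, and asks it to deliver a factor it cannot see; repairing this essentially means importing \cite[Proposition 3.5]{HTT11} as a black box, after which the proposition follows by the interpolation described above.
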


The next proposition provides an essential estimate for dealing with terms involving stochastic objects. 
Thanks to the regularity properties of stochastic convolution (as in Lemma \ref{Lem:sconv}), 
we can always place stochastic objects in $L^p(I\times \T^3)$ for some $p$. 
With a rather direct proof, we obtain the following results.

\begin{proposition}\label{Prop:uff}
Let $N_1\geq N_2\geq N_3\geq 1$ be dyadic numbers and let $I$ be an interval with $|I| \le 1$.
Assume that functions $u_\l$ and $f_\l$ satisfy
\[
P_{N_\l} u_\l = u_\l, \quad
P_{N_\l} f_\l = f_\l
\]
for $\l=1,2,3$.
For any permutation $(i,j,k)$ of $(1,2,3)$ and $p>4$,
we have
\begin{align}
\label{eq:uff}
\|
u_i f_j f_k
\|_{L^2_{t,x}(I\times \T^3)}
&\lesssim
N_2^{\frac{3}{2}-\frac{5}{p}}
\| u_i\|_{Y^0 (I)}
\| f_j\|_{L_{t,x}^{\frac{4p}{p-2}}(I\times \T^3) }
\| f_k\|_{L_{t,x}^{\frac{4p}{p-2}}(I\times \T^3) },
\\
\label{eq:uuf}
\| f_i u_j u_k \|_{L_{t,x}^2(I\times\T^3)}
&\les
N_2^{3-\frac{10}{p}}
\| f_i\|_{L_{t,x}^{\frac{2p}{p-4}}(I\times\T^3)}
\| u_{j} \|_{Y^0(I)}
\| u_{k} \|_{Y^0(I)}
.
\end{align}
\end{proposition}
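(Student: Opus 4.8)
The plan is to reduce both inequalities to Hölder's inequality in the space--time exponents, to convert the resulting $L^p_{t,x}$ norms into atomic norms by the cube-localized Strichartz estimate of Corollary \ref{Coro:str cubes}, and to recover the middle frequency $N_2$ in place of the top frequency $N_1$ through an $L^2$ almost-orthogonality decomposition into frequency cubes of side $N_2$. Throughout I would use the embeddings $Y^0\hookrightarrow V^2_\Delta L^2\hookrightarrow U^p_\Delta L^2$ of Proposition \ref{Prop:emb} (the last one valid since $p>4>2$) together with the corollary following Proposition \ref{Prop:emb}, which gives $\sum_{C}\|P_Cu\|_{V^2_\Delta L^2}^2\les\|u\|_{Y^0}^2$ for a cube partition.

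For \eqref{eq:uff}, since $\tfrac1p+2\cdot\tfrac{p-2}{4p}=\tfrac12$, Hölder yields
\[
\|u_if_jf_k\|_{L^2_{t,x}(I\times\T^3)}\le\|u_i\|_{L^p_{t,x}}\,\|f_j\|_{L^{4p/(p-2)}_{t,x}}\,\|f_k\|_{L^{4p/(p-2)}_{t,x}}.
\]
When $i\in\{2,3\}$ one has $N_i\le N_2$, so Corollary \ref{Coro:str cubes} applied to the single dyadic block, together with the above embeddings, gives $\|u_i\|_{L^p_{t,x}}\les N_i^{\frac32-\frac5p}\|u_i\|_{Y^0(I)}\le N_2^{\frac32-\frac5p}\|u_i\|_{Y^0(I)}$, which is the claim. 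When $i=1$ the two other factors are supported at frequencies $\le N_2$; I would then write $u_1=\sum_{C\in\mathcal{C}_{N_2}}P_Cu_1$, observe that each $(P_Cu_1)f_jf_k$ has spatial Fourier support in a fixed dilate of $C$ of side $\sim N_2$ and that these dilates overlap boundedly, and conclude $\|u_1f_jf_k\|_{L^2}^2\les\sum_C\|(P_Cu_1)f_jf_k\|_{L^2}^2$. Estimating each summand by Hölder, applying Corollary \ref{Coro:str cubes} at scale $N_2$, and summing over cubes with the corollary above produces the factor $N_2^{\frac32-\frac5p}$ and closes \eqref{eq:uff}.

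For \eqref{eq:uuf}, set $a=\tfrac{2p}{p-4}$, so that $\tfrac1a+\tfrac2p=\tfrac12$; note that $f_i$ carries no frequency weight and can always be placed in $\|f_i\|_{L^a_{t,x}}$. If both $u$-frequencies are $\le N_2$ (the case $i=1$), Hölder together with the single-block Strichartz estimate handles each $u$-factor directly: the lower one through $Y^0$, and the higher one, at frequency $\le N_2$, either through $Y^0$ or, via the defining bound $\|P_Nw\|_{L^p_{t,x}}\les N^{\frac32-\frac5p}\|w\|_{\wt X^0(I)}$ for a single block (which follows from \eqref{weak-norm} for $p\in\{p_0,p_1\}$ and by interpolation for $p\in[p_0,p_1]$), through $\wt X^0$; the two frequency factors multiply to at most $N_2^{3-\frac{10}p}$. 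If instead the higher $u$-frequency equals $N_1$ (the cases $i\in\{2,3\}$, where $u_{\min(j,k)}=u_1$), I would split $u_1=g+h$ with $g\in Y^0$ and $h\in\wt X^0$; for the $g$-piece I keep the whole product $f_i\,u_{\max(j,k)}\,g$ in $L^2$, decompose $g$ over $\mathcal{C}_{N_2}$, and run the orthogonality-plus-cube-Strichartz argument of \eqref{eq:uff} to reduce the top frequency $N_1$ to $N_2$, producing $N_2^{3-\frac{10}p}\|f_i\|_{L^a}\|u_{\max(j,k)}\|_{Y^0}\|g\|_{Y^0}$.

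The step I expect to be the genuine obstacle is the $h$-piece of the higher-frequency factor precisely when that factor sits at the top frequency $N_1$. The reduction from $N_1$ to $N_2$ is an $L^2$ orthogonality statement and transfers only to norms carrying the $U^2/V^2$ structure, through $\sum_C\|P_Cu\|_{V^2_\Delta L^2}^2\les\|u\|_{Y^0}^2$; it does \emph{not} transfer to the purely $L^p$-based weak norm, because the inequality $\big(\sum_C\|P_Ch\|_{L^p_{t,x}}^2\big)^{1/2}\les\|h\|_{L^p_{t,x}}$ is false for $p>2$ (Rubio--de~Francia controls $\|(\sum_C|P_Ch|^2)^{1/2}\|_{L^p}$, which is the smaller, $L^p(\ell^2)$ quantity). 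I therefore expect the correct treatment to avoid separating the two $u$-factors by Hölder and instead to estimate the bilinear object $u_{\max(j,k)}\,h$ directly, exploiting the transversality of the two frequency supports (one in $Y^0$, one in $\wt X^0$) to extract the missing power of $N_2/N_1$; making such a mixed $Y^0\times\wt X^0$ bilinear interaction quantitative — or, alternatively, organizing the case analysis so that the weak-norm factor is always the one at frequency $\le N_2$ — is the crux of the argument.
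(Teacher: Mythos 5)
Your proof of \eqref{eq:uff} is exactly the paper's proof: cube decomposition of the top-frequency factor at scale $N_2$, almost orthogonality of the outputs in $L^2$, H\"older plus Corollary \ref{Coro:str cubes} on each cube, and $\ell^2$-summation via $V^2_{rc}\hookrightarrow U^p$ and the square-summability of the $Y^0$-norm over a cube partition; the cases $i\in\{2,3\}$, which you settle by single-block estimates, are the ones the paper dismisses as ``similarly handled.''

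For \eqref{eq:uuf} the paper offers nothing beyond the sentence that it ``follows from a similar calculation,'' so you have gone further than the written proof does, and the obstruction you isolate is genuine. In fact \eqref{eq:uuf} is \emph{false} as literally stated: take $(i,j,k)=(2,1,3)$, $N_2=N_3=1$, $|I|=1$, $f_2=u_3\equiv 1$, and $u_1(t,x)=e^{i\xi_0\cdot x}$ time-independent with $|\xi_0|=N_1$. The left-hand side equals $1$, while $\|u_1\|_{Y^0+\wt X^0(I)}\le\|u_1\|_{\wt X^0(I)}\les N_1^{\frac{5}{p_0}-\frac32}\to 0$ by \eqref{weak-norm}, and the remaining factors $N_2^{3-\frac{10}{p}}\|f_2\|_{L^{2p/(p-4)}_{t,x}}\|u_3\|_{Y^0(I)}$ are $O(1)$; so the right-hand side tends to $0$ as $N_1\to\infty$. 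The mechanism is the one you describe: the literal index $\min(j,k)$ attaches the sum-space norm, hence possibly the purely $L^p$-based $\wt X^0$-norm, to the \emph{top}-frequency factor, for which neither the cube decomposition (no $\ell^2(L^p)$ summability over cubes for $p>2$, as you correctly argue against Rubio--de Francia) nor a single-block Strichartz bound (which costs $N_1^{3/2-5/p}$ rather than $N_2^{3/2-5/p}$) is available.

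The resolution is that the proposition's $\max/\min$ labels are swapped relative to what is intended and used: in Subcase A$_2$ and Cases B, C, D of the proof of Proposition \ref{Prop:non est}, the $Y^0$-norm always sits on the \emph{higher}-frequency $v$-factor and the $\wt X^0$-norm on the lower-frequency one, i.e.\ $u_{\max(j,k)}$ and $u_{\min(j,k)}$ must be read as ``the factor of maximal/minimal frequency,'' not maximal/minimal index. Under that reading, your ``alternative (b)'' is not a workaround but the proof itself, and it really is a repeat of \eqref{eq:uff}: the cube decomposition always falls on a $Y^0$-factor (the one at frequency $N_1$), while the $Y^0+\wt X^0$-factor only ever occurs at frequency $\le N_2$, where your single-block bound $\|P_N w\|_{L^p_{t,x}}\les N^{\frac32-\frac5p}\|w\|_{\wt X^0(I)}$ (valid for $p\in[p_0,p_1]$ by interpolation --- a harmless restriction on $p$, since the paper only invokes $p=q_0$) suffices. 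No bilinear $Y^0\times\wt X^0$ transversality estimate is needed. In short, your proposal is correct and complete once the statement's indexing is corrected, and you have uncovered a real defect in the statement rather than a gap in your own argument.
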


\begin{proof}
Since \eqref{eq:uuf} follows from a similar calculation,
we only prove \eqref{eq:uff}.
We  focus on the estimate
\begin{equation}
\|u_1 f_2 f_3\|_{L^2_{t,x} (I \times \T^3) }
\\
\lesssim
N_2^{\frac{3}{2}-\frac{5}{p}}
\|u_1\|_{Y^0(I)}
\|f_2\|_{L^q_{t,x} (I \times \T^3)}
\|f_3\|_{L^q_{t,x} (I \times \T^3)}
\label{eq:uff1}
\end{equation}
for $p>4$ and $q$ with $\frac{1}{p}+\frac{2}{q}=\frac{1}{2}$.
The other cases, such as
\begin{equation*}
    \| f_1 u_2 f_3\|_{L^2_{t,x} }
    \quad\quad
    \text{and}
    \quad\quad
    \| f_1 f_2 u_3\|_{L^2_{t,x}},
\end{equation*}
are similarly handled.

We use the same argument as in \cite[Proposition 3.5]{HTT11}.
Given a partition $\Z^3 = \bigcup_{k \in \N} C_k$ into cubes $C_k \in \mathcal{C}_{N_2}$ of size $N_2$,
the output
$(P_{C_k} u_1) f_2 f_3$
are almost orthogonal in $L^2(\T^3)$.
Hence, we obtain
\begin{equation}
\begin{aligned}
\|  u_1  f_2  f_3 \|^2_{L^2_{t,x} (I \times \T^3)}
&\les
\sum_{k \in \N}
\| (P_{C_k} u_1)  f_2 f_3\|^2_{L^2_{t,x} (I \times \T^3) }
\\
&\les
\sum_{k \in \N}
\|P_{C_k} u_1\|^2_{L^p_{t,x} (I \times \T^3)}
\| f_2\|^2_{L^q_{t,x} (I \times \T^3)} \| f_3\|^2_{L^q_{t,x} (I \times \T^3)}
\end{aligned}
\label{eq:uff2}
\end{equation}
for
$p>4$ and $q$ with $\frac{1}{p}+\frac{2}{q}=\frac{1}{2}$.
Corollary \ref{Coro:str cubes} and Proposition \ref{Prop:emb}
imply that
\[
\|P_{C_k} u_1\|_{L^p_{t,x} (I \times \T^3)}
\les
N_2^{\frac 32-\frac{5}{p}} \| \ind_I P_{C_k} u_1\|_{U^p_\Delta L^2}
\les
N_2^{\frac 32-\frac{5}{p}} \| \ind_I P_{C_k} u_1\|_{V^2_\Delta L^2}.
\]
By using Corollary \ref{est-Ys} and the square summability of the $Y^0$-norm, we have
\begin{equation}
\Big(
\sum_{k \in \N}
\|P_{C_k} u_1\|^2_{L^p_{t,x} (I \times \T^3)}
\Big)^{\frac 12}
\les
N_2^{\frac 32-\frac{5}{p}} 
\| u_1 \|_{Y^0(I)}.
\label{eq:uff4}
\end{equation}
By \eqref{eq:uff2} and \eqref{eq:uff4},
we obtain \eqref{eq:uff1}.
\end{proof}

\subsection{Regularity of stochastic convolutions}\label{Sec:s-conv}
In this subsection,
we give a precise definition of the stochastic convolution $\Psi$ (see \eqref{Psi}).
Moreover, we record its standard regularity properties.

Given two separable Hilbert spaces $H$ and $K$, 
we denote by $\HS(H; K)$ the space of Hilbert-Schmidt operators
$\phi$ from $H$ to $K$, endowed with the norm:
\begin{equation*}
    \|\phi\|_{\HS(H;K)}
    = \Big(\sum_{n\in\mathbb{Z}}\|\phi h_n\|_K^2 \Big)^{\frac{1}{2}},
\end{equation*}

\noi
where $\{h_n\}_{n \in \Z}$ is an orthonormal basis of $H$.
Note that the Hilbert-Schmidt norm does not depend on the choice of $\{h_n\}_{n\in \Z}$.

Let $(\Omega,\mathcal{F},\mathbb{P})$ be a probability space.
Let $W$ be the $L^2(\T^3)$-cylindrical Wiener process given by
\begin{equation*}
    W(t,x,\omega)
    \coloneqq
    \sum_{n\in\mathbb{Z}^3}
    \beta_n(t,\omega)e_n(x),
\end{equation*}

\noi
where $\{\beta_n\}_{n\in\mathbb{Z}^3}$ is a family of independent complex-valued Brownian motions
and $e_n(x)\coloneqq e^{i n\cdot x},$ $n\in\mathbb{Z}^3$. 
The space-time white noise $\xi$ is defined as the (distributional) time derivative of $W$, i.e., formally represented as ``$\xi dt=d W$''.
Here, the spatial regularity of $W$ is very rough.
More precisely, for each fixed $t\geq 0$, $W(t)$ belongs to $H^{-\frac{3}{2}-\eps}(\T^3)$ almost surely for any $\eps > 0$.

As mentioned earlier, we apply the operator $\phi \in \HS(L^2(\T^3); H^s(\T^3))$ for some $s \in \R$ and consider a smoothed-out version $\phi W$.
Then,
the stochastic convolution
$\Psi(t)$ is defined as follows
\begin{equation}\label{Psi'}
    \Psi(t)
    =-i\int_0^t S(t-t')\phi dW(t')
    = -i \sum_{n\in\mathbb{Z}^3}
   \phi( e_n)
    \int_0^t e^{-i(t-t')|n|^2} d\beta_n(t').
\end{equation}

Next, we state the regularity properties of the stochastic
convolution.

\begin{lemma}\label{Lem:sconv}
Let $T>0$ and $s \in \R$.
Suppose that 
$\phi\in \HS(L^2(\mathbb{T}^3); H^s(\T^3))$.

\noi
\textup{(i)} 
$\Psi\in C([0,T];H^s(\T^3))$ almost surely. 
Moreover, for any $1\leq p<\infty$, we have
\[
    \mathbb{E}
    \Big[\sup_{0\leq t\leq T}\|\Psi (t) \|^p_{H^s}\Big]
    \leq
    C\|\phi\|_{\HS(L^2;H^s)}^p,
\]
where $C=C(T,p)>0$.

\noi
\textup{(ii)}
For $1\leq q<\infty$ and $2\leq r<\infty$,
we have
$\Psi\in L^q([0,T];W^{s,r}(\T^3))$ almost surely.
Moreover,
for any $1\leq p<\infty$, it holds that
\begin{align}\label{eq:reg s-t}
    \mathbb{E}
    \big[
    \|\Psi\|_{L^q_T W_x^{s,r}}^p
    \big]
    \leq
    C\|\phi\|^p_{\HS(L^2;H^s)},
\end{align}
where $C=C(T,p,q,r)>0$.

\end{lemma}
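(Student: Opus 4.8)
The plan is to treat the two parts separately, in both cases exploiting that $S(t)=e^{it\Delta}$ is a unitary group on every $H^s(\T^3)$, together with the Gaussian nature of $\Psi$.

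For part \textup{(i)}, the key point is that, although $\Psi(t)=-i\int_0^t S(t-t')\phi\,dW(t')$ is not itself a martingale (the propagator depends on the terminal time $t$), one can factor out the group and write $\Psi(t)=S(t)M(t)$, where $M(t):=-i\int_0^t S(-t')\phi\,dW(t')$ \emph{is} a continuous $H^s$-valued martingale. Since $S(t)$ is an isometry on $H^s$, we have $\|\Psi(t)\|_{H^s}=\|M(t)\|_{H^s}$, so it suffices to control $M$. I would first compute the quadratic variation,
\[
\langle M\rangle_T=\int_0^T\|S(-t')\phi\|_{\HS(L^2;H^s)}^2\,dt'=T\,\|\phi\|_{\HS(L^2;H^s)}^2,
\]
again using that $S(-t')$ is unitary on $H^s$. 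The Burkholder--Davis--Gundy inequality for Hilbert-space-valued martingales then gives
\[
\E\Big[\sup_{0\le t\le T}\|\Psi(t)\|_{H^s}^p\Big]=\E\Big[\sup_{0\le t\le T}\|M(t)\|_{H^s}^p\Big]\lesssim_p\E\big[\langle M\rangle_T^{p/2}\big]\lesssim_{T,p}\|\phi\|_{\HS(L^2;H^s)}^p.
\]
Almost-sure continuity is then immediate: $M\in C([0,T];H^s)$ a.s.\ as a stochastic integral with continuous integrand, and $t\mapsto S(t)M(t)$ is continuous since $S(\cdot)$ is a strongly continuous, uniformly bounded group on $H^s$.

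For part \textup{(ii)}, the strategy is to push the probabilistic moment through the space-time norm using the pointwise Gaussianity of $\Psi$. For fixed $(t,x)$ the quantity $\jb{\nabla}^s\Psi(t,x)=-i\sum_n(\jb{\nabla}^s\phi e_n)(x)\int_0^t e^{-i(t-t')|n|^2}\,d\beta_n(t')$ is a mean-zero complex Gaussian, so by the It\^o isometry and the independence of the $\beta_n$ its variance is
\[
\E\big|\jb{\nabla}^s\Psi(t,x)\big|^2=t\sum_n\big|(\jb{\nabla}^s\phi e_n)(x)\big|^2=:t\,G(x).
\]
Gaussian hypercontractivity upgrades this to all moments, $\big(\E|\jb{\nabla}^s\Psi(t,x)|^p\big)^{1/p}\lesssim_p (t\,G(x))^{1/2}$. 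I would then apply Minkowski's integral inequality to exchange $L^p(\Omega)$ with the deterministic $L^q_tL^r_x$ norm---first for $p\ge\max(q,r)$, the remaining range following since $\Omega$, $[0,T]$ and $\T^3$ all have finite measure---to obtain $\big(\E\|\Psi\|_{L^q_TW_x^{s,r}}^p\big)^{1/p}\lesssim_{T,q}\|G^{1/2}\|_{L^r_x}$.

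The main obstacle is precisely this last, purely deterministic, spatial square-function bound
\[
\|G^{1/2}\|_{L^r_x}=\Big\|\big(\textstyle\sum_n|\jb{\nabla}^s\phi e_n|^2\big)^{1/2}\Big\|_{L^r_x}\lesssim\|\phi\|_{\HS(L^2;H^s)}:
\]
when $r=2$ it is an exact identity, whereas the range $r>2$ is where the Hilbert--Schmidt structure of $\phi$ must genuinely be used, following \cite{BLL24}. Once this bound is in hand, the almost-sure membership $\Psi\in L^q([0,T];W^{s,r}(\T^3))$ follows from finiteness of the $p$-th moment for some (hence every) finite $p$.
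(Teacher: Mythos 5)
Your proof of part (i) is correct and essentially complete, and it is the standard argument: the factorization $\Psi(t)=S(t)M(t)$ with the genuine $H^s$-valued martingale $M(t)=-i\int_0^t S(-t')\phi\,dW(t')$, the identity $\langle M\rangle_T=T\|\phi\|_{\HS(L^2;H^s)}^2$ (valid because $S(-t')$ is an isometry of $H^s$ and hence preserves the Hilbert--Schmidt norm), the Burkholder--Davis--Gundy inequality, and strong continuity of the group. The paper itself offers no proof of the lemma --- it cites \cite[Lemma 3.4]{CM18} for (i) and \cite[Lemma 2.8]{BLL24} for (ii) --- so for part (i) your write-up correctly fills the omission, and there is nothing finer to compare it against.

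Part (ii) contains a genuine gap, located exactly at the step you flagged and deferred to \cite{BLL24}: the deterministic square-function bound $\|G^{1/2}\|_{L^r_x}\les\|\phi\|_{\HS(L^2;H^s)}$ with $G=\sum_n|\jb{\nb}^s\phi e_n|^2$. For $r>2$ this inequality is not merely unproven, it is \emph{false} for general Hilbert--Schmidt $\phi$, so no appeal to the Hilbert--Schmidt structure (yours or the reference's) can close it. Take the rank-one operator $\phi=\langle\,\cdot\,,e_0\rangle_{L^2}\,g$ with $g\in H^s(\T^3)\setminus W^{s,r}(\T^3)$; such $g$ exists for every $r>2$. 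Then $\jb{\nb}^s\phi e_n=\delta_{n,0}\,\jb{\nb}^s g$, so $G^{1/2}=|\jb{\nb}^s g|$ and your bound would read $\|g\|_{W^{s,r}}\les\|g\|_{H^s}$, which fails on a compact manifold. Worse, for this $\phi$ the paper's own formula \eqref{Psi'} gives exactly $\Psi(t)=-i\,\beta_0(t)\,g$, so $\|\Psi(t)\|_{W^{s,r}}=|\beta_0(t)|\,\|g\|_{W^{s,r}}=\infty$ for a.e.\ $(t,\omega)$: the conclusion of Lemma \ref{Lem:sconv}(ii), both the almost-sure membership and the moment bound \eqref{eq:reg s-t}, fails for this admissible $\phi$. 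Since Gaussian moments are two-sided, your reduction via Minkowski and hypercontractivity is essentially an equivalence, so this shows the lemma as stated (arbitrary $\phi\in\HS(L^2;H^s)$, all $2\le r<\infty$) cannot be true; the defect is in the statement and its citation, not in your probabilistic reduction.

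Two further remarks. First, your variance computation $\E|\jb{\nb}^s\Psi(t,x)|^2=tG(x)$ follows the second expression in \eqref{Psi'}, which tacitly assumes $S(t-t')\phi e_n=e^{-i(t-t')|n|^2}\phi e_n$, i.e.\ that $\phi$ intertwines with the Schr\"odinger group; for general $\phi$ the variance is $\sum_n\int_0^t|(S(t-t')\jb{\nb}^s\phi e_n)(x)|^2\,dt'$, and frequency-concentrated rank-one examples still violate \eqref{eq:reg s-t} for large $r$ because $S(\tau)$ yields no integrability gain on $\T^3$. Second, your argument does become a complete proof under the structural hypothesis that repairs the lemma: if $\phi$ is a Fourier multiplier, $\phi e_n=\lambda_n e_n$, then $|e_n(x)|\equiv 1$ gives $G(x)\equiv\sum_n\jb{n}^{2s}|\lambda_n|^2=\|\phi\|^2_{\HS(L^2;H^s)}$, a constant, and your chain of inequalities closes for every $q,r<\infty$; alternatively one may simply assume the finiteness of $\|G^{1/2}\|_{L^r}$ (a $\gamma$-radonifying-type hypothesis, $\phi$ Hilbert--Schmidt into $W^{s,r}$), which is precisely the quantity your reduction isolates.
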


The continuity of the additive noise for part \textup{(i)} 
can be  in \cite[Lemma 3.4]{CM18}; 
and for 
part \textup{(ii)} one can follow a similar proof as in
\cite[Lemma 2.8]{BLL24}. We omit the proof.
Replacing $L_T^q W_x^{s,r}$ in \eqref{eq:reg s-t} with $\l^2 L_T^q W_x^{s,r}$ yields the following lemma.

\begin{lemma}\label{Lem:sconv1}
Let $d\geq 1$, $T>0$, and $s \in \R$.
Suppose that $\phi\in \HS(L^2(\mathbb{T}^3); H^s(\T^3))$.
Then,
for any $1\leq p,q,r<\infty$, we have
\[
    \mathbb{E}
    \big[
   \| \Psi\|_{\l^2 L^q_T W_x^{s,r}}^p
    \big]
\leq
    C\|\phi\|^p_{\HS(L^2;H^s)},
\]
where $C=C(T,p,q,r)>0$.
\end{lemma}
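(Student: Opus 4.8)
The plan is to upgrade Lemma \ref{Lem:sconv}(ii) from the ordinary norm $L^q_T W^{s,r}_x$ to the $\l^2$-variant $\l^2 L^q_T W^{s,r}_x$, which by definition amounts to controlling $\big(\sum_{N\ge 1} N^{2s}\|P_N\Psi\|^2_{L^q_T L^r_x}\big)^{1/2}$ rather than $\|\Psi\|_{L^q_T W^{s,r}_x}$. The key structural observation is that $P_N$ commutes with the Schr\"odinger propagator and with the stochastic integral, so $P_N\Psi$ is itself a stochastic convolution driven by the frequency-localized operator $\phi_N := P_N\phi$. Concretely, from \eqref{Psi'} one has $P_N\Psi(t) = -i\sum_{n\in\Z^3}\psi_N(n)\,\phi(e_n)\int_0^t e^{-i(t-t')|n|^2}d\beta_n(t')$, which is the stochastic convolution associated to the Hilbert-Schmidt operator $\phi_N = P_N\phi$. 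Hence Lemma \ref{Lem:sconv}(ii) applies to each $P_N\Psi$ individually.

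First I would reduce to the case $p=q=r$, or more cleanly to $p\ge \max(q,r)$, by Minkowski's (or H\"older's) integral inequality, so that the outer $\l^2_N$ sum can be exchanged past the $\P$-expectation and the space-time integrations. The cleanest route is: raise to a power $p$ large enough that $p\ge 2$, $p\ge q$, $p\ge r$, and apply the generalized Minkowski inequality to pull $\l^2_N$ inside. Explicitly, since the stochastic convolution has Gaussian tails, $\big(\E[\|\cdot\|^p]\big)^{1/p}$ is comparable for all finite $p$ (Khintchine / hypercontractivity on the Wiener chaos of order one), so it suffices to bound a single high moment. I would then write
\begin{equation*}
\Big(\E\big[\|\Psi\|^p_{\l^2 L^q_T W^{s,r}_x}\big]\Big)^{\frac 1p}
\les
\Big(\sum_{N\ge 1} N^{2s}\big(\E\big[\|P_N\Psi\|^p_{L^q_T L^r_x}\big]\big)^{\frac 2p}\Big)^{\frac 12},
\end{equation*}
using Minkowski to move the $\l^2_N$ norm outside the $L^p(\Omega)$ norm (valid once $p\ge 2$).

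Next I would apply the per-frequency estimate \eqref{eq:reg s-t} of Lemma \ref{Lem:sconv}(ii) with $s=0$ to each $P_N\Psi$, viewing it as the stochastic convolution of $\phi_N=P_N\phi$: this yields $\big(\E[\|P_N\Psi\|^p_{L^q_T L^r_x}]\big)^{1/p}\le C\|P_N\phi\|_{\HS(L^2;L^2)}$, with $C=C(T,p,q,r)$ independent of $N$. Substituting into the display above gives the bound $\big(\sum_{N\ge 1} N^{2s}\|P_N\phi\|^2_{\HS(L^2;L^2)}\big)^{1/2}$. The final step is to recognize that this is precisely $\|\phi\|_{\HS(L^2;H^s)}$: by the Littlewood--Paley/square-function characterization of $H^s$ (essentially \eqref{chrW} at $q=2$, i.e.\ Plancherel), one has $\sum_N N^{2s}\|P_N\psi\|^2_{L^2}\sim \|\psi\|^2_{H^s}$ for any $\psi$, and summing this over an orthonormal basis $\{e_n/\|e_n\|\}$ of $L^2(\T^3)$ converts the $\HS(L^2;L^2)$ norms of $P_N\phi$, weighted by $N^{2s}$, into $\|\phi\|^2_{\HS(L^2;H^s)}$.

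The main obstacle, and the only genuinely non-routine point, is justifying the interchange of the $\l^2_N$ summation with the expectation and the space-time norms in a way that loses no $N$-dependent constant; this is why I insist on reducing to a single finite moment $p$ and using Minkowski's inequality in the correct order (outer $\l^2_N$, then $L^p(\Omega)$, then $L^q_T L^r_x$), which requires $p\ge 2$ together with $p\ge q,r$ so that all the relevant exponents dominate $2$. Once the frequency projection is absorbed into $\phi$ and the constant $C(T,p,q,r)$ from \eqref{eq:reg s-t} is seen to be uniform in $N$ (it depends only on $T,p,q,r$, not on the operator), the argument closes immediately. I would note in passing that this is the standard mechanism by which $\l^2$-valued square-function bounds for stochastic convolutions are obtained, and so the proof is essentially a bookkeeping upgrade of Lemma \ref{Lem:sconv}(ii) rather than a new estimate.
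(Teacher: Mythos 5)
Your proposal is correct and follows essentially the same route as the paper's proof: Minkowski's integral inequality (valid since $p\ge 2$) to exchange the outer $\l^2_N$ sum with the $L^p(\Omega)$ norm, then Lemma \ref{Lem:sconv}(ii) applied to each $P_N\Psi$ viewed as the stochastic convolution driven by $P_N\phi$, and finally the square-summation identity turning $\big\|\,\|P_N\phi\|_{\HS}\big\|_{\l^2_N}$ into $\|\phi\|_{\HS(L^2;H^s)}$. Your extra ingredients are cosmetic rather than substantive: the reduction to a single large moment via Gaussian moment equivalence is fine but unneeded (the paper simply works with $p\ge2$, the case $p<2$ following by H\"older), and your added requirement $p\ge q,r$ is superfluous since the $L^q_TL^r_x$ norms never need to cross the $\l^2_N$ sum.
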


\begin{proof}
For $p \ge 2$,
Minkowski's integral inequality
and
Lemma \ref{Lem:sconv}
yield that
\begin{align*}
\big\| \| \Psi\|_{\l^2 L^q_T W_x^{s,r}} \big\|_{L^p(\O)}
&\le
\big\| \| P_N \Psi \|_{L^p(\O; L^q_T W_x^{s,r})} \big\|_{\l^2_N}
\\
&\le
C
\big\| \| P_N \phi \|_{\HS(L^2; H^s)} \big\|_{\l^2_N}
\\
&=
C
\|\phi\|_{\HS(L^2;H^s)}.
\end{align*}
\end{proof}

\section{Energy-critical NLS with a perturbation}
\label{SEC:Local}

In this section, we first consider the following defocusing energy-critical NLS with a perturbation:
\begin{equation}\label{P NLS}
\begin{cases}
    i\partial_t v+\Delta v=
\NN(v+f)\\
    v|_{t=t_0}=v_0
\end{cases}
\end{equation}

\noi
for some
given deterministic function $f$ satisfying certain regularity conditions.
Here, 
we denote the nonlinearity by
\begin{align}
\label{P-non}
\NN(u):=
|u|^4 u.
\end{align}
After proving local well-posedness for \eqref{P NLS}, 
we transform \eqref{SNLS} into the form of \eqref{P NLS} (with $f =\Psi$) by first-order expansion. Combining this with the regularity properties of $\Psi$, we establish local well-posedness for \eqref{SNLS}.

To estimate the stochastic convolution,
we use the $Z(I)$-norm defined by
\begin{align}\label{norm:Z}
\|f\|_{Z(I)}
:=
\| f \|_{\l^2 L_t^{\frac{2p_0}{p_0-4}} W_x^{1, \frac{2p_0}{p_0-4}}(I\times\T^3)}
,
\end{align}
where $p_0$ is given by \eqref{p0a}.
Note that
\[
\frac{2p_0}{p_0-4}
= 82.
\]
We have the following nonlinear estimate.

\begin{proposition}\label{Prop:non est}
Let 
$I$ be an interval with $|I|\leq 1$.
Then,
for $v\in X^1(I)$ and $f \in Z(I)$,
we have
\begin{align}
\label{eq:non-est}
\begin{aligned}
\big \|
\NN (v+f)
\big \|_{N^1(I)}
&\les
\|v\|_{X^1(I)}
\|v\|_{M(I)}^4
+ |I|^\theta
\Big(
\|v\|_{X^1(I)}^2
\|v\|^2_{M(I)}
\|f\|_{Z(I)}
\\
&\qquad
+
\sum_{k=2}^5
\|v\|_{X^1(I)}^{5-k}
\|f\|_{Z(I)}^k
\Big),
\end{aligned}
\end{align}
where
$\theta >0$ is a constant
and
$N^1(I)$-, $M(I)$-, and $Z(I)$-norms are defined in \eqref{NsI}, \eqref{M-norm}, and \eqref{norm:Z}, respectively.
\end{proposition}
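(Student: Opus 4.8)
The plan is to argue by duality, reduce the $N^1$-estimate to a six-linear spacetime integral, and then split the latter into a purely deterministic contribution and contributions containing the perturbation $f$. First I would apply Proposition~\ref{Prop:inhom} with $s=1$, so that it suffices to bound $|\int_I\int_{\T^3}\NN(v,f)\,\overline w\,dx\,dt|$ uniformly over $w\in Y^{-1}(I)$ with $\|w\|_{Y^{-1}(I)}=1$. I would then expand $\NN(v,f)=(v+f)^3\,\overline{(v+f)}^{\,2}$ into monomials and sort them by the number $k\in\{0,1,\dots,5\}$ of factors drawn from $\{f,\overline f\}$. The term $k=0$ is the pure quintic $|v|^4v$; every term with $k\ge1$ carries at least one copy of $f$ and, as we will see, an extra factor $|I|^\theta$.

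For $k=0$ the claim reduces to the deterministic energy-critical bound $\||v|^4v\|_{N^1(I)}\les\|v\|_{X^1(I)}\|v\|_{M(I)}^4$, which I would obtain exactly as in~\cite{IP12}. Concretely, I would Littlewood--Paley decompose the five copies of $v,\overline v$ together with the dual factor $w$, order the six resulting frequencies, and use that the two largest must be comparable for the spatial integral to survive; this lets the weight $\jb{\nb}$ implicit in the $N^1$/$Y^{-1}$ pairing be transferred onto a highest-frequency factor. Splitting the six factors into two triples, applying Cauchy--Schwarz in $L^2_{t,x}(I\times\T^3)$, and estimating each triple by Proposition~\ref{Prop:uuu} then yields the bound, the off-diagonal gain $(N_3/N_1+1/N_2)^\delta$ furnishing summability over all dyadic frequencies and the transferred derivative being absorbed into a single $X^1(I)$-factor while the remaining four sit in $M(I)$.

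For $k\ge1$ the symmetry underlying the deterministic argument breaks, since $f$ is controlled only in $Z(I)$ and, unlike $v$ or $w$, cannot be placed in a $Y^0(I)$-type norm (the stochastic convolution does not lie in any $V^2$-space). Here I would again decompose dyadically, split the six factors into two triples, and use Cauchy--Schwarz, but now invoke Proposition~\ref{Prop:uff} in place of Proposition~\ref{Prop:uuu}: a triple containing one $v$ and two $f$'s is bounded through~\eqref{eq:uff}, a triple with two $v$'s and one $f$ through~\eqref{eq:uuf}, and a triple of three $f$'s by plain H\"older, the dual factor $w$ being placed in the $Y^0$-slot of~\eqref{eq:uff}--\eqref{eq:uuf} after its $Y^{-1}$-weight is shifted onto the top frequency. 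The cases are organized by how the $k$ perturbation factors distribute between the two triples. Because Proposition~\ref{Prop:uff} provides no off-diagonal gain, the frequency summation must now be carried by genuinely $\ell^2$-summable norms: I would place the $f$-factors in $Z(I)$ (which is $\ell^2$ in frequency by definition) and up to two of the $v$-factors in $X^1(I)$, keeping the remaining lower-frequency copies of $v$ in $M(I)$, while the $N_2$-powers produced by Proposition~\ref{Prop:uff} are defeated by the derivative weights supplied by \eqref{chrW} and the $W^{1,\cdot}_x$-regularity built into $Z(I)$. This reproduces the shapes $\|v\|_{X^1(I)}^2\|v\|_{M(I)}^{3-k}\|f\|_{Z(I)}^k$ for $1\le k\le3$ and $\|v\|_{X^1(I)}\|f\|_{Z(I)}^4$, $\|f\|_{Z(I)}^5$ for $k=4,5$.

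Finally I would extract the factor $|I|^\theta$. The time exponents demanded by Proposition~\ref{Prop:uff}, namely $\frac{4p}{p-2}$ and $\frac{2p}{p-4}$ with $p$ near $p_0$, can be taken strictly below the exponent $82=\frac{2p_0}{p_0-4}$ carried by $Z(I)$ upon choosing $p>p_0$; since $\T^3$ has finite measure, H\"older in time on $I$ then gains a positive power $|I|^\theta$ for every term with $k\ge1$, whereas the deterministic $k=0$ term receives no such gain, matching the statement. I expect the genuine difficulty to be the case analysis for $k\ge1$: keeping track of which triple carries the top frequency and hence the transferred derivative, ensuring that whenever an $f$-factor carries the top frequency one can still close via~\eqref{eq:uff}--\eqref{eq:uuf} rather than through the unavailable $Y^0$-bound on $f$, and checking that in the absence of the off-diagonal factor $(N_3/N_1+1/N_2)^\delta$ the $N_2$-powers are always strictly beaten so that the dyadic sums converge.
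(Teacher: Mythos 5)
Your proposal follows essentially the same route as the paper's proof: duality via Proposition~\ref{Prop:inhom}, sorting monomials by the number of $f$-factors, dyadic decomposition and splitting of the six localized factors into two triples estimated by Proposition~\ref{Prop:uuu} (purely deterministic triples, supplying the off-diagonal gain) and Proposition~\ref{Prop:uff} (triples containing $f$), with summability carried by the derivative weights and the $\ell^2$-in-frequency structure of the $X^1$, $Y^{-1}$, and $Z$ norms, and with the $|I|^\theta$ factor obtained exactly as you describe---running Proposition~\ref{Prop:uff} at an exponent strictly larger than $p_0$ (the paper takes $q_0=4+\tfrac12$, packaged into an intermediate norm $\wt Z(I)$) and applying H\"older in time against the $Z(I)$-norm. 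The only cosmetic deviations are that the paper cites \cite[Lemma 3.2]{IP12} for the pure-$v$ term rather than re-deriving it, and handles the pure-$f$ quintic term by a direct $L^1_t H^1_x$ H\"older/Sobolev bound instead of the trilinear machinery.
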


Before we proof the Proposition \ref{Prop:non est}, we make the following remark to 
comparing this estimate to its
deterministic counterpart.

\begin{remark}\rm
As we mentioned in the introduction,
the stochastic convolution belongs only
to a $V^p$-type space for $p>2$.
Hence, 
a separate analysis is required for those contributions involving stochastic components.
Since
the stochastic convolution has better spatial integrability,
we can use the $Z(I)$-norm to estimate the perturbation part $f$ instead of the $X^1(I)$-norm.
Moreover,
thanks to the improved integrability,
we also obtain the factor $|I|^\theta$ in the estimate of the parts containing $f$.
\end{remark}

\begin{proof}[Proof of Proposition \ref{Prop:non est}]

We first observe that 
when $f\equiv0$.
It follows from \cite[Lemma 3.2]{IP12} that
\begin{align*}
\big \|
\NN (v)
\big \|_{N^1(I)}
&\les
\|v\|_{X^1(I)} \|v\|^{4}_{M(I)}
.
\end{align*}
Moreover,
when $v\equiv0$,
Proposition \ref{Prop:inhom} with H\"older's inequality yields that
\begin{align*}
\big \|
\NN (f)
\big \|_{N^1(I)}
&\les
\| f^5 \|_{L_t^1 H_x^1 (I \times \T^3)}
\les
\| f \|_{L_t^5 H_x^1 (I \times \T^3)}
\| f \|_{L_t^5 L_x^\infty (I \times \T^3)}^4
\les
|I|^{1- 5\frac{p_0-4}{2p_0}}
\|f\|_{Z(I)}^5.
\end{align*}
Here,
we used the bound
\[
\| f \|_{L_t^5 H_x^1 (I \times \T^3)}
+
\| f \|_{L_t^5 L_x^\infty (I \times \T^3)}
\les
|I|^{\frac 15-\frac{p_0-4}{2p_0}}
\|f\|_{Z(I)},
\]
which follows from H\"older's and Sobolev's inequalities, \eqref{sqLW}, and \eqref{norm:Z}.

Next, we focus on the cross terms, defined as those containing at least one $v$ and one $f$ in the nonlinearity.
By Proposition \ref{Prop:inhom}, we have
\[
    \big\| \NN(v+f) \|_{N^1(I)}
    \lesssim
    \sup_{\|g\|_{Y^{-1} (I)}=1}
    \bigg|  \iint_{I\times\T^3} \mathcal{N}(v,f)(t,x)\overline{g(t,x)}dxdt  \bigg|.
\]
To obtain the factor $|I|^\theta$ for some $\theta>0$,
we use the norm
\[
\|f\|_{\wt Z(I)}
:=
\| f \|_{\l^2 L_t^{\frac{2q_0}{q_0-4}} W_x^{1, \frac{2q_0}{q_0-4}}(I\times\T^3)}
,
\]
where $q_0$ is given by
\begin{equation}
q_0 := 4 + \frac 12.
\label{p0aa}
\end{equation}
Note that
\[
\frac{2q_0}{q_0-4}
= 18
<
\frac{2p_0}{p_0-4}
.
\]
We need to prove the multilinear estimate
\begin{align}
\label{eq:non-est3}
\begin{aligned}
    \bigg|  \iint_{I\times\T^3}
\big(
\NN(v+f)
- \NN(v)
- \NN(f)
\big)
\cj{g}
dxdt  \bigg|
&\les 
\|g\|_{Y^{-1} (I)}
\Big(
\|v\|_{X^1(I)}^2
\|v\|_{M(I)}^2
\|f\|_{\wt Z(I)}
\\
&
\quad
+
\sum_{k=2}^4
\|v\|_{X^1(I)}^{5-k}
\|f\|_{\wt Z(I)}^k
\Big)
.
\end{aligned}
\end{align}

Set
$w_0 = g$.
Let $w_j$ denote either $v$ or $f$ for $j=1, \dots, 5$.
We dyadically decompose 
\[
w_j =\sum_{N_j\geq1} P_{N_j} w_j
\]
for $j=0, \dots, 5$.
We omit the conjugate signs as it does not contribute to our analysis.
From \eqref{eq:non-est3} and \eqref{P-non},
we estimate the following:
\begin{equation}
\begin{aligned}
\bigg| \int_{I\times \T^3} \prod_{j=0}^5  w_j   dxdt  \bigg|
&\le
\sum_{N_0, \dots, N_5 \ge 1}
\bigg| \int_{I\times \T^3} \prod_{j=0}^5  P_{N_j} w_j  dxdt  \bigg|
\\
&=:
\sum_{\Nb}
S_{\Nb},
\end{aligned}
\label{Sum0}
\end{equation}

\noi
where $\Nb := (N_0,\dots, N_5)$.
Note that $S_\Nb$ becomes zero
unless the largest two frequencies are comparable.

\medskip
\noi
{\bf {Case~A: $vvvvf$ case}}. 
\smallskip

Without loss of generality,  we may
assume $w_j=v$ for $j=1,\dots,4$ and $w_5=f$ 
such that
$N_1\geq N_2\geq N_3\geq N_4\geq 1$ and $N_5\geq 1$.
We consider $\Nb$ satisfying the following two cases:
\[
N_0 \les N_1
 \quad 
 \quad
 {\rm or}
 \quad 
 \quad
 N_0\sim N_5\gg N_1.
 \]

\medskip
\noi
{\bf Subcase A$_1$:
$N_0 \les N_1 $}.  
\smallskip

Combining Propositions \ref{Prop:emb}, \ref{Prop:uuu} and \ref{Prop:uff} with \eqref{Sum0}, we obtain \begin{align*}
S_{\Nb}
&\le
\|P_{N_0}g P_{N_2} v P_{N_5}f \|_{L_{t,x}^2}
\|P_{N_1}v P_{N_3}v P_{N_4}v\|_{L_{t,x}^2}
\\
&\les
\med(N_0,N_2,N_5)^{3-\frac{10}{q_0}}
\|P_{N_0}g\|_{Y^0(I)}
\|P_{N_2}v\|_{Y^0(I)}
\|P_{N_5}f\|_{L_{t,x}^{\frac{2q_0}{q_0-4}}(I \times \T^3)}
\\
&\qquad
\times
\Big(\frac{N_4}{N_1}+\frac{1}{N_3}\Big)^\delta
\|P_{N_1}v\|_{Y^0(I)}
\|P_{N_3}v\|_{M(I)}
\|P_{N_4}v\|_{M(I)}
\\
&\les
N_0^2 N_1^{-1}
\min(N_0,N_2,N_5)^{-1}
\max(N_0,N_2,N_5)^{-1}
\med(N_0,N_2,N_5)^{2-\frac{10}{q_0}}
\\
&\qquad
\times
\|P_{N_0} g \|_{Y^{-1}(I)}
\| P_{N_1} v \|_{X^1(I)}
\| P_{N_2} v \|_{X^1(I)}
\| v \|_{M(I)}^2
\| P_{N_5} f \|_{\wt Z(I)}
.
\end{align*}
Note that $N_0 \les N_1$ and $S_{\Nb} \neq 0$ are equivalent to
the following three cases:
\[
A_{1,1}:
\
N_0 \sim N_1 \ges N_5, \quad
A_{1,2}:
\
N_1 \sim N_5 \ges N_0, \quad
A_{1,3}:
\
N_1 \sim N_2 \ges N_0, N_5.
\]
Since
\eqref{p0aa} yields
$2-\frac{10}{q_0}<0$,
we have
\[
N_0^2 N_1^{-1}
\max(N_0,N_2,N_5)^{-1}
\med(N_0,N_2,N_5)^{2-\frac{10}{q_0}}
\les
\begin{cases}
\max (N_2,N_5)^{2-\frac{10}{q_0}},
& \text{if } A_{1,1}, \\
\max (N_0,N_2)^{2-\frac{10}{q_0}},
& \text{if } A_{1,2}, \\
N_1^{2-\frac{10}{q_0}},
& \text{if } A_{1,3}.
\end{cases}
\]
We therefore obtain
\[
\sum_{N_0 \les N_1} S_\Nb
\les
\| g \|_{Y^{-1}(I)}
\| v \|_{X^1(I)}^2
\| v \|_{M(I)}^2
\| f \|_{\wt Z(I)},
\]
which shows  \eqref{eq:non-est3}.

\medskip
\noi
{\bf Subcase A$_2$: $N_0\sim N_5\gg N_1$}. 
\smallskip

Propositions \ref{Prop:uuu} and \ref{Prop:uff} with \eqref{Sum0} and $N_0\sim N_5\gg N_1$ yield that
\begin{align*}
S_{\Nb}
&\le
\|P_{N_0} g P_{N_3}v P_{N_4}v \|_{L_{t,x}^2}
\| P_{N_1}v P_{N_2}v P_{N_5}f\|_{L_{t,x}^2}
\\
&\les
\Big(\frac{N_3}{N_0}+\frac{1}{N_2}\Big)^\delta
\|P_{N_0} g \|_{Y^0(I)}
\|P_{N_3}v\|_{M(I)}
\|P_{N_4}v\|_{M(I)}
\\
&\qquad
\times
N_1^{3-\frac{10}{q_0}}
\|P_{N_1}v\|_{Y^0(I)}
\|P_{N_2}v\|_{Y^0(I)}
\|P_{N_5}f\|_{L_{t,x}^{\frac{2q_0}{q_0-4}}(I \times \T^3)}
\\
&\les
N_1^{2-\frac{10}{q_0}}
\|P_{N_0} g\|_{Y^{-1}(I)}
\| v \|_{X^1(I)}^2
\| v \|_{M(I)}^2
\| P_{N_5} f \|_{\wt Z(I)}
.
\end{align*}
By $2-\frac{10}{q_0}<0$,
we have \eqref{eq:non-est3}.

\medskip
\noi
{\bf Case B: $vvvff$ case}. 
\smallskip

Without loss of generality, we may
assume $w_j=v$
 for $j=1,2,3$
and $w_i=f$ for $i=4,5$
such that $N_1\geq N_2\geq N_3\geq 1$ and $N_4\geq N_5\geq 1$. 
Proposition \ref{Prop:uff} with  \eqref{Sum0} yields that
\begin{align*}
S_{\Nb}
&\le
\|P_{N_0}g P_{N_2}v P_{N_5}f\|_{L_{t,x}^2}
\|P_{N_1}v P_{N_3}v P_{N_4}f\|_{L_{t,x}^2}
\\
&\les
\med(N_0,N_2,N_5)^{3-\frac{10}{q_0}}
\|P_{N_0}g\|_{Y^0(I)}
\|P_{N_2}v\|_{Y^0(I)}
\|P_{N_5}f\|_{L_{t,x}^{\frac{2q_0}{q_0-4}}(I \times \T^3)}
\\
&\qquad
\times
\med(N_1,N_3,N_4)^{3-\frac{10}{q_0}}
\|P_{N_1}v\|_{Y^0(I)}
\|P_{N_3}v\|_{Y^0(I)}
\|P_{N_4}f\|_{L_{t,x}^{\frac{2q_0}{q_0-4}}(I \times \T^3)}
\\
&\les
N_0^2
\min(N_0,N_2,N_5)^{-1}
\max(N_0,N_2,N_5)^{-1}
\med(N_0,N_2,N_5)^{2-\frac{10}{q_0}}
\\
&\qquad
\times
\min(N_1,N_3,N_4)^{-1}
\max(N_1,N_3,N_4)^{-1}
\med(N_1,N_3,N_4)^{2-\frac{10}{q_0}}
\\
&\qquad
\times
\|P_{N_0} g \|_{Y^{-1}(I)}
\bigg(
\prod_{j=1}^2 \| P_{N_j} v \|_{X^1(I)}
\| P_{N_{j+3}} f_{j+3} \|_{\wt Z(I)}
\bigg)
\| P_{N_3} v \|_{X^1(I)}
.
\end{align*}
We need to consider the following five cases:
\[
\begin{aligned}
&N_0 \sim N_1 \ges N_4, \quad
N_0 \sim N_4 \ges N_1, \quad
N_1 \sim N_4 \ges N_0, \\
&N_1 \sim N_2 \ges N_0, N_4, \
\text{ or } \
N_4 \sim N_5 \ges N_0, N_1.
\end{aligned}
\]
In all the cases,
we have
\[
N_0 \les
\max (N_1,N_4)
.
\]
In particular,
\[
N_0^2
\max(N_0,N_2,N_5)^{-1}
\max(N_1,N_3,N_4)^{-1}
\les 1.
\]
Thanks to the presence of
$\med(N_0,N_2,N_5)^{2-\frac{10}{q_0}} \med(N_1,N_3,N_4)^{2-\frac{10}{q_0}}$,
we obtain
\[
\sum_{\Nb} S_\Nb
\les
\|g\|_{Y^{-1}(I)}
\| v \|_{X^1(I)}^3
\| f \|_{\wt Z(I)}^2,
\]
which shows  \eqref{eq:non-est3}.

\medskip
\noi
{\bf Case C: $vvfff$ case}. 
\smallskip

This case is similarly handled as in Case B.
Without loss of generality,  we may
assume $w_j=v_j$ for $j=1,2$
and $w_i=f_i$ for $i=3,4,5$
such that 
$N_1\geq N_2\geq 1$ 
and $N_3\geq N_4\geq N_5\geq 1$.
Proposition \ref{Prop:uff} with \eqref{Sum0} yields that
\begin{align*}
S_{\Nb}
&\le
\|P_{N_0} g P_{N_2}v P_{N_5}f\|_{L_{t,x}^2}
\|P_{N_1}v P_{N_3}f P_{N_4}f\|_{L_{t,x}^2}
\\
&\les
\med(N_0,N_2,N_5)^{3-\frac{10}{q_0}}
\|P_{N_0}g\|_{Y^0(I)}
\|P_{N_2}v\|_{Y^0(I)}
\|P_{N_5}f\|_{L_{t,x}^{\frac{2q_0}{q_0-4}}(I \times \T^3)}
\\
&\qquad
\times
\med(N_1,N_3,N_4)^{\frac 32-\frac{5}{q_0}}
\|P_{N_1}v\|_{Y^0(I)}
\|P_{N_3}f\|_{L_{t,x}^{\frac{4q_0}{q_0-2}} (I \times \T^3)}
\|P_{N_4}f\|_{L_{t,x}^{\frac{4q_0}{q_0-2}} (I \times \T^3)}
\\
&\les
N_0^2
\min(N_0,N_2,N_5)^{-1}
\max(N_0,N_2,N_5)^{-1}
\med(N_0,N_2,N_5)^{2-\frac{10}{q_0}}
\\
&\qquad
\times
\min(N_1,N_3,N_4)^{-1}
\max(N_1,N_3,N_4)^{-1}
\med(N_1,N_3,N_4)^{\frac 12 -\frac{5}{q_0}}
\\
&\qquad
\times
\|P_{N_0}g\|_{Y^{-1}(I)}
\bigg(
\prod_{j=1}^2 \| P_{N_j} v \|_{X^1(I)}
\| P_{N_{j+2}} f \|_{\wt Z(I)}
\bigg)
\| P_{N_5} f \|_{\wt Z(I)}
.
\end{align*}
We need to consider the following five cases:
\[
\begin{aligned}
&N_0 \sim N_1 \ges N_3, \quad
N_0 \sim N_3 \ges N_1, \quad
N_1 \sim N_3 \ges N_0, \\
&N_1 \sim N_2 \ges N_0, N_3, \
\text{ or } \
N_3 \sim N_4 \ges N_0, N_1.
\end{aligned}
\]
In all the cases,
we have
\[
N_0 \les
\max (N_1,N_3).
\]
In particular,
\[
N_0^2
\max(N_0,N_2,N_5)^{-1}
\max(N_1,N_3,N_4)^{-1}
\les 1.
\]
Thanks to the presence of
$\med(N_0,N_2,N_5)^{2-\frac{10}{q_0}} \med(N_1,N_3,N_4)^{\frac 12-\frac{5}{q_0}}$,
we obtain
\[
\sum_{\Nb} S_\Nb
\les
\| g \|_{Y^{-1}(I)}
\| v \|_{X^1(I)}^2
\| f \|_{\wt Z(I)}^3,
\]
which shows  \eqref{eq:non-est3}.

\medskip
\noi
{\bf Case D: $vffff$ case}. 
\smallskip

This case also is similarly handled as in Case B.
We may
assume $w_1=v_1$,  
and $w_i=f_i$ for $i=2,3,4,5$
such that 
$N_1\geq 1$ 
and $N_2\geq N_3\geq N_4\geq N_5\geq 1$.
Proposition \ref{Prop:uff} with \eqref{Sum0} yields that
\begin{align*}
S_{\Nb}
&\le
\|P_{N_0}g P_{N_3}f P_{N_5}f\|_{L_{t,x}^2}
\|P_{N_1}v P_{N_2}f P_{N_4}f\|_{L_{t,x}^2}
\\
&\les
\med(N_0,N_3,N_5)^{\frac 32-\frac{5}{q_0}}
\|P_{N_0}g\|_{Y^0(I)}
\|P_{N_3}f\|_{L_{t,x}^{\frac{4q_0}{q_0-2}} (I \times \T^3)}
\|P_{N_5}f\|_{L_{t,x}^{\frac{4q_0}{q_0-2}} (I \times \T^3)}
\\
&\qquad
\times
\med(N_1,N_2,N_4)^{\frac 32-\frac{5}{q_0}}
\|P_{N_1}v\|_{Y^0(I)}
\|P_{N_2}f\|_{L_{t,x}^{\frac{4q_0}{q_0-2}} (I \times \T^3)}
\|P_{N_4}f\|_{L_{t,x}^{\frac{4q_0}{q_0-2}} (I \times \T^3)}
\\
&\les
N_0^2
\min(N_0,N_3,N_5)^{-1}
\max(N_0,N_3,N_5)^{-1}
\med(N_0,N_3,N_5)^{\frac 12-\frac{5}{q_0}}
\\
&\qquad
\times
\min(N_1,N_2,N_4)^{-1}
\max(N_1,N_2,N_4)^{-1}
\med(N_1,N_2,N_4)^{\frac 12 -\frac{5}{q_0}}
\\
&\qquad
\times
\|P_{N_0}g\|_{Y^{-1}(I)}
\| P_{N_1} v \|_{X^1(I)}
\bigg(
\prod_{j=2}^5
\| P_{N_j} f \|_{\wt Z(I)}
\bigg)
.
\end{align*}
We need to consider the following five cases:
\[
\begin{aligned}
&N_0 \sim N_1 \ges N_2, \quad
N_0 \sim N_2 \ges N_1, \quad
N_1 \sim N_2 \ges N_0, \
\text{ or } \
N_2 \sim N_3 \ges N_0, N_1.
\end{aligned}
\]
In all the cases,
we have
\[
N_0 \les
\max (N_1,N_2).
\]
In particular,
\[
N_0^2
\max(N_0,N_3,N_5)^{-1}
\max(N_1,N_2,N_4)^{-1}
\les 1.
\]
Thanks to the presence of
$\med(N_0,N_3,N_5)^{\frac 12 -\frac{5}{q_0}} \med(N_1,N_2,N_4)^{\frac 12-\frac{5}{q_0}}$,
we obtain
\[
\sum_{\Nb} S_\Nb
\les
\| g \|_{Y^{-1}(I)}
\| v \|_{X^1(I)}
\| f \|_{\wt Z(I)}^4,
\]
which shows  \eqref{eq:non-est3}.
\end{proof}

\subsection{Local well-posedness of the perturbed NLS}

We are now ready to present the local well-posedness of the perturbed NLS.
A similar proof can be found in \cite[Proposition 6.3]{BOP15}.
See also \cite[Proposition 3.3]{IP12}.

\begin{proposition}\label{Prop:loc}
Let 
$I =[t_0,t_1]\subset\R$ with $|I| \le 1$.
Suppose that 
\begin{align*}
\| v_0\|_{H^1(\T^3) }  \leq R \quad \text{and} \quad
\| f\|_{L^\infty_t H^1_x  (I\times \T^3)}\leq M
\end{align*}

\noi
for some $R,M>0$.
Then, there exists some small $\eta_0 = \eta_0(R, M) > 0$
such that if
\begin{equation*}
\max \big( \|S(t-t_0)v_0\|_{\wt X^1(I)},    \|f\|_{Z(I)} \big) \leq \eta
\end{equation*}

\noi
for some $\eta\leq  \eta_0$,
then
 there exists a unique solution
 $v\in X^1(I)$ 
to \eqref{P NLS}. Moreover, we have
\begin{equation}
\label{eq_cnt0}
    \|v-S(t-t_0)v_0\|_{X^1(I)}
    \les
    \eta,
\end{equation}
where the implicit constant is independent of $R$ and $M$.

%

\end{proposition}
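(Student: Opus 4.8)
The plan is to realize the solution as a fixed point of the Duhamel map
\[
\Phi(v)(t) := S(t-t_0)v_0 - i\int_{t_0}^t S(t-t')\,\NN(v,f)(t')\,dt'
\]
on the complete metric space
\[
B := \big\{ v \in X^1(I) : \|v - S(\cdot - t_0)v_0\|_{X^1(I)} \leq C_0\,\eta \big\},
\]
equipped with the $X^1(I)$-distance, where $C_0$ is a universal constant (one may take $C_0=1$). By the definition \eqref{NsI} of the $N^1(I)$-norm, $\|\Phi(v) - S(\cdot-t_0)v_0\|_{X^1(I)} \leq \|\NN(v,f)\|_{N^1(I)}$, while the linear estimate (Proposition~\ref{Prop:linear}) gives $\|S(\cdot-t_0)v_0\|_{X^1(I)} \leq \|v_0\|_{H^1} \leq R$.

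The mechanism that makes the argument close is the gap between the strong norm $X^1(I)$, which is of size $R$, and the weak norm $\wt X^1(I)$, which is of size $\eta$. Indeed, for $v\in B$ and $\eta_0$ small enough that $C_0\eta_0\le R$, one has $\|v\|_{X^1(I)}\le 2R$; on the other hand, \eqref{eq_wenorm} together with the hypothesis $\|S(\cdot-t_0)v_0\|_{\wt X^1(I)}\le \eta$ yields $\|v\|_{\wt X^1(I)}\lesssim \eta$. The definition \eqref{M-norm} of the interpolated norm then gives $\|v\|_{M(I)}\lesssim (R\eta)^{1/2}$. Substituting these, together with $\|f\|_{Z(I)}\le\eta$, into the nonlinear estimate \eqref{eq:non-est}, every summand on the right-hand side is at least quadratic in $\eta$ with a coefficient depending on $R$; hence $\|\NN(v,f)\|_{N^1(I)}\le C(R)\,\eta^2$. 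Choosing $\eta_0=\eta_0(R,M)$ so small that $C(R)\eta_0\le C_0$ makes $\Phi$ map $B$ into itself, and the resulting bound is precisely \eqref{eq_cnt0}, with an implicit constant independent of $R$ and $M$.

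For the contraction I would prove the difference analogue of \eqref{eq:non-est}. Since $\NN(v,f)=|v+f|^4(v+f)$ is a quintic polynomial in $(v,\bar v)$ with $f$ held fixed, the difference $\NN(v_1,f)-\NN(v_2,f)$ telescopes into a finite sum of terms, each carrying exactly one factor of $v_1-v_2$ and four factors drawn from $\{v_1,v_2,f\}$. Re-running the case-by-case multilinear analysis of Proposition~\ref{Prop:non est} — that is, Propositions~\ref{Prop:uuu} and \ref{Prop:uff} applied frequency-by-frequency, supplemented by the deterministic estimate from \cite{IP12} — yields
\[
\|\NN(v_1,f)-\NN(v_2,f)\|_{N^1(I)}\lesssim \|v_1-v_2\|_{X^1(I)}\,\Theta,
\]
where, after using $\|v_1-v_2\|_{M(I)}\lesssim \|v_1-v_2\|_{X^1(I)}$ via \eqref{eq_wenorm}, the factor $\Theta$ is a sum of products of four norms chosen from $\{\|v_i\|_{M(I)},\|f\|_{Z(I)}\}$ (carrying the admissible $|I|^\theta$ gains on the mixed terms). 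On $B$ each such product is bounded by $C(R)\eta^{3/2}$, so shrinking $\eta_0$ further forces the Lipschitz constant below $1$. The Banach fixed point theorem then produces the unique $v\in B$ solving \eqref{P NLS}; uniqueness in the full class $X^1(I)$ follows by partitioning $I$ into finitely many subintervals on which the free evolution and $f$ are again small in $\wt X^1$ and $Z$, and applying the contraction on each in turn.

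I expect the main obstacle to lie in the difference estimate, since Proposition~\ref{Prop:non est} is stated only for a single function: one must check that Cases A--D survive verbatim when one slot carries the difference $v_1-v_2$ and the remaining slots carry $v_1,v_2,$ or $f$. The essential bookkeeping is to ensure that in every resulting term at least one of the four non-difference factors is measured in a genuinely small norm ($M(I)$ or $Z(I)$) rather than in $X^1(I)$, so that the Lipschitz constant is $O(\eta^{3/2})$ rather than $O(1)$. This is exactly what the frequency-localized bounds of Propositions~\ref{Prop:uuu} and \ref{Prop:uff} deliver, since they always assign the single highest-frequency factor to $Y^0\supset X^1$ and place the remaining factors in $M(I)$ or in the spatial $L^p$-norms controlling $f$.
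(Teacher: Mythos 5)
Your proposal is correct and follows essentially the same route as the paper: a Banach fixed-point argument for the Duhamel map on a ball in $X^1(I)$, exploiting the gap between the $O(R)$-sized $X^1$-norm and the $O(\eta)$-sized $\wt X^1$- and $Z$-norms through Proposition \ref{Prop:non est}, with the contraction supplied by the multilinear difference analogue of \eqref{eq:non-est} (the paper centers its ball at the origin with the two constraints $\|v\|_{X^1}\le 2\wt R$, $\|v\|_{\wt X^1}\le 2\eta$ rather than at the free evolution, but this is cosmetic). One minor imprecision: your claimed form of $\Theta$ (all four non-difference factors measured in $M$ or $Z$) is slightly too strong, since when the difference $v_1-v_2$ occupies an $M$-slot one of the remaining factors must sit in $X^1$ — the paper's own bound reads $\|v_1-v_2\|_{X^1}\big(\|v_1\|_{X^1}^2\|v_j\|_{M}^2+\cdots\big)$ — but this only changes the Lipschitz constant from $C(R)\eta^{3/2}$ to $C(R)\eta$, which still vanishes as $\eta\to 0$, exactly as your own closing paragraph (requiring merely that at least one non-difference factor carry a small norm) anticipates.
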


\begin{proof}
Set
\[
 B_{\wt R, \eta}:=\{
 v\in  X^1(I)
 \; : \:
 \| v \|_{X^1(I)} \leq 2 \wt R ,\
 \| v\|_{\wt X^1 (I)} \leq 2 \eta\}
\]

 \noi
 where 
 $\wt R := \max(R,M,1)$.
It follows from \eqref{eq_wenorm} that $B_{\wt R, \eta}$ is a closed subset of $X^1(I)$ with respect to the metric
\[
d(u,v) := \| u-v \|_{X^1(I)}.
\]

We define the solution map $\Gamma$ of \eqref{P NLS} by
\begin{equation*}
    \Gamma(v)(t) \coloneqq
    S(t-t_0)v_0
    -i \int_0^t S(t-t') \NN(v+f) (t') dt'.
\end{equation*}
We show that the map $\Gamma$
 is contraction on $(B_{\wt R, \eta}, d)$.

We choose $\eta_0\ll \wt R^{-3}$.
 In particular, 
 we have $\eta_0 \ll \wt R^{-1} \leq 1$.
 Fix $\eta \leq \eta_0$ in the following.
By  Propositions \ref{Prop:linear} and \ref{Prop:non est},
\eqref{M-norm},
and Young's inequality,
we have
\begin{align}
\label{eq_cnt1}
\begin{aligned}
    \|\Gamma(v)\|_{X^1(I)}
    &\leq 
    \|S(t-t_0)v_0\|_{X^1(I)}
    +\| \Gamma(v) - S(t-t_0)v_0 \|_{X^1(I)}    \\
&\leq
\|v_0\|_{H^1}
+ C \big(
\wt R^3 \eta^2
+ \eta^5
\big)
\\
&
<2 \wt R
\end{aligned}
\end{align}
for 
$v \in  B_{\wt R, \eta}$.
The difference
is similarly handed:
\begin{equation*}
\begin{split}
    \|\Gamma(v_1)-\Gamma(v_2)\|_{X^1(I)}
    &=
    \big\| \NN(v_1+f) - \NN(v_2+f) \big\|_{N^1(I)}\\
    &\leq
    C\|v_1-v_2\|_{X^1(I)} \sum_{j=1}^2
\big(
\|v_1\|_{X^1 (I)}^2 \|v_j\|_{M(I)}^2
+ \|v_1\|_{X^1 (I)}^2 \| f\|_{Z(I)}^2
\\
&\qquad
+ \|v_1\|_{X^1 (I)} \|f\|_{Z(I)}^3
+\|f\|_{Z(I)}^4
    \big)\\
&\leq
C \wt R^3 \eta \|v_1-v_2\|_{X^1(I)} < \frac12 \|v_1-v_2\|_{X^1(I)}
\end{split}
\end{equation*}

\noi
for 
$v_1, v_2 \in  B_{\wt R, \eta}$.
Moreover, with \eqref{eq_wenorm} and \eqref{eq_cnt1},
we have
\begin{equation*}
\begin{split}
\|\Gamma(v)\|_{\wt X^1 (I)}
&\leq
\|S(t-t_0)v_0\|_{\wt X^1(I)}
+ C\| \Gamma(v) - S(t-t_0)v_0 \|_{X^1(I)}
\\
&<2\eta
\end{split}
\end{equation*}

\noi
for
$v\in  B_{\wt R, \eta}$.
Hence, $\G$ is a contraction on $(B_{\wt R, \eta}, d)$.
 The estimate \eqref{eq_cnt0} follows from 
 the choice of $\eta_0$
 and \eqref{eq_cnt1}.
\end{proof}

Next,
we will recall the stability theory of the defocusing energy-critical NLS. 
First, we recall the following global space-time bound on the solution to the defocusing energy-critical NLS on $\R\times\T^3$;
see \cite[Theorem 1.1]{IP12}.



\begin{lemma}\label{st-bound}
Let 
$w_0\in H^1(\T^3)$,
$t_0 \in \R$,
and
$w$ be the solution to the defocusing energy-critical NLS
\begin{equation}\label{Eq:w NLS}
\begin{cases}
    i\pa_t w+\Delta w=|w|^{4}w \\
  w|_{t=t_0}=w_0.
\end{cases}
\end{equation}

\noi
Then, there exists a unique global solution $w\in X^1(\R)$ such that 
the following space-time bound holds:
\begin{equation*}
    \|w\|_{X^1(\R)}\leq C(\|w_0\|_{H^1(\T^3)}).
\end{equation*}
\end{lemma}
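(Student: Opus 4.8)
The plan is to follow the concentration-compactness and rigidity scheme of Kenig--Merle, adapted to the torus. The starting point is the small-data global theory that is already contained in the machinery of this section: combining Propositions \ref{Prop:linear}, \ref{Prop:inhom}, and \ref{Prop:uuu} in a fixed-point argument (exactly as in Proposition \ref{Prop:loc} with $f\equiv 0$) shows that if $\|S(t-t_0)w_0\|_{\wt X^1(I)}$ is sufficiently small on an interval $I$, then $w$ exists on $I$ with $\|w\|_{X^1(I)}\les\|w_0\|_{H^1}$. Hence local existence is automatic, and the only genuine issue is to propagate a \emph{large-data} bound. For this the conserved energy
\[
E(w)=\frac12\int_{\T^3}|\nb w|^2\,dx+\frac16\int_{\T^3}|w|^6\,dx
\]
is decisive: in the defocusing regime $E$ together with the conserved mass controls $\|w(t)\|_{H^1}$ uniformly in $t$, which gives an a priori $H^1$ bound but \emph{not} by itself a space-time bound.

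The heart of the matter is upgrading this a priori $H^1$ bound to the global $X^1$ bound. I would argue by contradiction: if the claim failed there would be a sequence of solutions with uniformly bounded energy but $\|w_n\|_{X^1(\R)}\to\infty$, and minimizing over the energy threshold would produce an almost-periodic \emph{critical element} of minimal energy whose $X^1$-norm is infinite. Constructing this critical element requires a linear and then nonlinear profile decomposition tailored to the Schr\"odinger propagator on $\T^3$. The profiles split into two regimes: Euclidean profiles, which concentrate at a point and at a scale, so that after rescaling the equation converges to the energy-critical NLS on $\R^3$; and bounded-frequency (large-scale) profiles, governed by the equation on the torus itself. Approximating a concentrating profile by a genuine solution of the Euclidean equation, and then transferring the global space-time bound on $\R^3$ (Colliander--Keel--Staffilani--Takaoka--Tao, \cite{CKSTT08}) back to the torus through a stability/perturbation argument, is the technically heaviest part of this step.

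Finally comes rigidity: the minimal critical element must be ruled out. Having localized it in space and frequency via almost-periodicity, I would derive a contradiction from a (frequency-localized) Morawetz-type/extinction estimate, forcing the solution to disperse and hence to vanish, which contradicts that its energy is a strictly positive minimum. The main obstacle throughout is the absence of dispersion on the compact torus: it is exactly this feature that makes the profile decomposition bimodal (Euclidean versus large-scale), obstructs a clean monotonicity formula, and forces the delicate Euclidean-approximation and perturbation arguments. Since this entire program is carried out in full by Ionescu--Pausader, I would ultimately invoke \cite[Theorem 1.1]{IP12} directly.
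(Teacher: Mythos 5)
Your proposal is correct and takes essentially the same route as the paper: the paper supplies no proof of Lemma~\ref{st-bound}, recalling it verbatim as \cite[Theorem 1.1]{IP12}, which is exactly what you invoke in the end. One caveat on your sketch of what lies inside that citation: Ionescu--Pausader's argument contains no Morawetz/rigidity step (no usable monotonicity formula exists on $\T^3$) --- the Euclidean profiles are handled via \cite{CKSTT08} plus stability as you say, but the remaining scale-one profile is eliminated simply by local theory on time intervals shrinking to zero, which is also why their quantitative bound is naturally stated on bounded time intervals rather than as a uniform $X^1(\R)$ bound over all of $\R$.
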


We also recall the following perturbation lemma from 
\cite[Proposition 3.4]{IP12}.

\begin{lemma}\label{Lem:Per}
Let $v_0\in H^1(\T^3)$, $t_0 \in \R$, and $I$ is an open bounded interval containing  $t_0$. 
Assume $v$ is a solution to the following perturbed equation 
\begin{equation*}
\begin{cases}
    i\pa_t v+\Delta v=|v|^4v+e\\
    v|_{t=t_0}=v_0
\end{cases}
\end{equation*}

\noi
on $I\times\T^3$,
where $e$ is an given function.
Also, assume there exists some  $1\leq  R<\infty$ such that
 $v$ satisfies 
\begin{equation*}
    \|v\|_{\wt X^1(I)}
    +\|v\|_{L_t^\infty H_x^1(I\times\T^3)}\leq R.
\end{equation*}

\noi
Let $w_0\in H^1(\T^3)$ be the initial data of \eqref{Eq:w NLS} such that the smallness condition
\begin{align}
\label{eq_smallness}
\|v_0-w_0\|_{H^1(\T^3)}
+\|e \|_{N^1(I)}
\leq \eps
\end{align}

\noi
holds for some $0<\eps<\eps_0$, 
where $\eps_0$ is a small constant 
$0<\eps_0=\eps_0(R)<1$.
Then, there exists a solution $w\in X^1(I)$ to the 
defocusing energy-critical NLS \eqref{Eq:w NLS} and
\begin{align*}
\|v\|_{X^1(I)}+\|w\|_{X^1(I)}\leq C(R),
\quad
\quad
\|v-w\|_{X^1(I)}\leq C(R)\eps,
\end{align*}

\noi
where $C(R)$ is a nondecreasing function of $R$.
\end{lemma}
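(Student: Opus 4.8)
The plan is to follow the stability scheme of \cite[Proposition 3.4]{IP12}: I would subdivide $I$ into finitely many subintervals on which the weak norm of $v$ is small, and then propagate the closeness of $w$ to $v$ inductively across these subintervals, controlling the difference through the nonlinear estimates already established above. Throughout I write $\NN(u)=|u|^4u$ for the deterministic quintic nonlinearity, so that the relevant bounds are exactly Proposition \ref{Prop:non est} specialized to $f\equiv 0$ (together with the difference version used in the proof of Proposition \ref{Prop:loc}). The genuine solution $w$ of \eqref{Eq:w NLS} is furnished by Lemma \ref{st-bound}, and since $\|w_0\|_{H^1}\le\|v_0\|_{H^1}+\eps\le R+1$, that lemma already gives $\|w\|_{X^1(I)}\le C(R)$; hence the entire difficulty lies in the closeness estimate $\|v-w\|_{X^1(I)}\le C(R)\eps$.

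First I would record an a priori bound $\|v\|_{X^1(I)}\le C(R)$. Since $\|v\|_{\wt X^1(I)}\le R$ and the $\wt X^1$-norm is comparable to a sum of $L^p_{t,x}$-type norms (for $p\in\{p_0,p_1\}$) whose $p$-th powers add up over a partition of $I$, I would split $I=\bigcup_{j=0}^{L-1}I_j$ into $L=L(R)$ consecutive subintervals $I_j=[\tau_j,\tau_{j+1}]$ with $\|v\|_{\wt X^1(I_j)}\le\eta_0$, where $\eta_0=\eta_0(R)$ is to be chosen. On each $I_j$ the Duhamel formula for the perturbed equation together with Propositions \ref{Prop:linear} and \ref{Prop:non est} gives $\|v\|_{X^1(I_j)}\le\|v(\tau_j)\|_{H^1}+C\|v\|_{X^1(I_j)}\|v\|_{M(I_j)}^4+\|e\|_{N^1(I_j)}$, and since $\|v\|_{M(I_j)}^4=\|v\|_{\wt X^1(I_j)}^2\|v\|_{X^1(I_j)}^2\le\eta_0^2\|v\|_{X^1(I_j)}^2$, with $\|v(\tau_j)\|_{H^1}\le R$ and $\|e\|_{N^1(I_j)}\le\eps_0<1$, a continuity argument with $\eta_0$ small closes the bound to $\|v\|_{X^1(I_j)}\le 2R+2$. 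Square-summing over the $L(R)$ subintervals then yields $\|v\|_{X^1(I)}\le C(R)$.

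Next I would set $\delta:=v-w$, which solves $i\pa_t\delta+\Dl\delta=\big(\NN(v)-\NN(w)\big)+e$ with $\delta(t_0)=v_0-w_0$ and $\|\delta(t_0)\|_{H^1}\le\eps$. Since $\NN(v)-\NN(w)=\NN(v)-\NN(v-\delta)$ expands into terms each carrying at least one factor of $\delta$, the difference estimate underlying Proposition \ref{Prop:non est} gives, on each $I_j$, a bound $\|\NN(v)-\NN(w)\|_{N^1(I_j)}\les\|\delta\|_{X^1(I_j)}\big(\|v\|_{M(I_j)}+\|\delta\|_{M(I_j)}\big)^4$. Inserting this and $\|e\|_{N^1(I_j)}\le\eps$ into $\|\delta\|_{X^1(I_j)}\le\|\delta(\tau_j)\|_{H^1}+\|e\|_{N^1(I_j)}+\|\NN(v)-\NN(w)\|_{N^1(I_j)}$, and using that $\|v\|_{M(I_j)}$ is small on $I_j$ while $\|\delta\|_{\wt X^1(I_j)}$ will be kept small, a bootstrap closes to $\|\delta\|_{X^1(I_j)}\le C\big(\|\delta(\tau_j)\|_{H^1}+\|e\|_{N^1(I_j)}\big)$. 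Since $\|\delta(\tau_{j+1})\|_{H^1}\le\|\delta\|_{X^1(I_j)}$ by the embedding $X^1\hookrightarrow L^\infty_tH^1_x$, iterating across the $L(R)$ subintervals gives $\|\delta\|_{X^1(I)}\le C(R)\,\eps$, and then the stated bounds follow.

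The main obstacle is the bookkeeping in the induction rather than any single nonlinear estimate: each application of the local stability bound multiplies the error by a constant $C>1$, so after $L(R)$ steps the accumulated constant is of size $C^{L(R)}=C(R)$, and I must choose $\eps_0=\eps_0(R)$ small enough (in terms of this compounded constant and of $\eta_0(R)$) that the smallness hypotheses feeding the bootstrap---in particular that $\|\delta\|_{\wt X^1(I_j)}$ stays below the threshold making the quintic difference term perturbative---are preserved at \emph{every} stage of the iteration. Verifying that these thresholds survive the compounding is the delicate point; once it is arranged, the remaining computations are the routine multilinear bounds already available from Propositions \ref{Prop:non est} and \ref{Prop:uuu}.
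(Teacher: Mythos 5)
Your proposal cannot be compared against a proof in the paper, because the paper gives none: Lemma~\ref{Lem:Per} is imported verbatim from \cite[Proposition 3.4]{IP12}. Measured against that source, your scheme is the standard one and structurally sound: subdivide $I$ into $L(R)$ subintervals on which the $\wt X^1$-norm of $v$ is small, close a bootstrap for $v$ and for $\dl=v-w$ on each piece, and accept the compounded constant $C^{L(R)}=C(R)$, choosing $\eps_0(R)$ below all thresholds. One genuine structural difference is your use of Lemma~\ref{st-bound} to furnish $w$ globally with $\|w\|_{X^1(I)}\le C(R)$ at the outset: in \cite{IP12} this is not available (their stability proposition is an ingredient in the proof of that global theorem, so existence of $w$ on all of $I$ must be produced by the stability iteration itself), but within the logical structure of this paper, where both results are quoted as known, your shortcut is legitimate and removes the existence part of the argument. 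It also gives you the option of refining the partition so that $\|w\|_{\wt X^1(I_j)}\le\eta_0$ as well, which cleanly supplies the control of $\|w\|_{M(I_j)}$ that your induction otherwise has to extract from the bootstrap hypothesis.

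The one step that would fail as written is your displayed difference estimate
\begin{equation*}
\|\NN(v)-\NN(w)\|_{N^1(I_j)}\les\|\dl\|_{X^1(I_j)}\big(\|v\|_{M(I_j)}+\|\dl\|_{M(I_j)}\big)^4 .
\end{equation*}
This is not a consequence of the available multilinear machinery. In the quintilinear estimates behind Proposition~\ref{Prop:non est} (equivalently \cite[Lemma 3.2]{IP12}), the $X^1$-norm (more precisely the $Y^0$/$Y^1$-norm) must sit on the factor carrying the highest frequency, because Proposition~\ref{Prop:uuu} only gains from the factor $\big(\tfrac{N_3}{N_1}+\tfrac{1}{N_2}\big)^{\delta}$ when the $Y^0$-norm is placed on $P_{N_1}u_1$. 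In the expansion of $\NN(v)-\NN(w)$ the highest-frequency factor need not be $\dl$; when it is $v$ or $w$, the estimate produces terms of the form
\begin{equation*}
\big(\|v\|_{X^1(I_j)}+\|w\|_{X^1(I_j)}\big)\,\|\dl\|_{M(I_j)}\big(\|v\|_{M(I_j)}+\|w\|_{M(I_j)}\big)^3 ,
\end{equation*}
and even after upgrading $\|\dl\|_{M(I_j)}\les\|\dl\|_{X^1(I_j)}$, the bracket multiplying $\|\dl\|_{X^1(I_j)}$ is of size $C(R)\,\eta_0^3$, not $\eta_0^4$: a large constant compensated only by powers of the small $M$-norms. (Compare the difference bound actually used in the proof of Proposition~\ref{Prop:loc}, whose prefactor $\|v_1-v_2\|_{X^1(I)}$ multiplies terms such as $\|v_1\|_{X^1(I)}^2\|v_j\|_{M(I)}^2$.) Your inequality, taken literally, hides this $C(R)$ and would suggest an $R$-independent smallness threshold; the correct bound forces exactly the dependence $\eta_0=\eta_0(R)\ll C(R)^{-1}$ that makes $L$, and hence the compounded constant, depend on $R$. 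Since your surrounding prose already chooses $\eta_0(R)$ and $\eps_0(R)$, the fix is to replace the displayed bound by the correct one and rerun the bootstrap with the term $C(R)\eta_0^3\|\dl\|_{X^1(I_j)}$ absorbed into the left-hand side; the rest of your argument then goes through.
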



\section{Proof of global well-posedness }
\label{SEC:Th}
In this section, we prove the global well-posedness of the defocusing energy-critical SNLS \eqref{SNLS}. 
Its proof is based on 
an iterative  argument provided 
the almost sure energy bound (Proposition \ref{Prop:as E}),
the global-in-time space-time bound on the solution (Lemma \ref{st-bound}),
and the stability theory (Lemma \ref{Lem:Per}).

Proposition \ref{Prop:loc} with
Lemmas \ref{Lem:sconv} and
\ref{Lem:sconv1}
yields
the local well-posedness of \eqref{SNLS}.

\begin{corollary}
\label{COR_LWP}
Let $\phi\in\HS(L^2(\T^3);H^1(\T^3))$. 
 Then, given any $u_0 \in H^1(\T^3)$, there exists an almost surely positive stopping time $T = T_\o(u_0)$
 and a unique local-in-time solution
$u \in C([0, T]; H^1(\T^3))$ to the energy-critical SNLS \eqref{SNLS}. 
In particular, solutions are unique in the class
 \[
 \Psi + X^1 ([0,T]).
 \]

\end{corollary}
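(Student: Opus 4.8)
The plan is to deduce Corollary \ref{COR_LWP} directly from the local well-posedness of the perturbed NLS (Proposition \ref{Prop:loc}) by choosing $f = \Psi$, where $\Psi$ is the stochastic convolution defined in \eqref{Psi'}. Recall that if $u$ solves \eqref{SNLS}, then $v := u - \Psi$ solves the perturbed equation \eqref{P NLS} with $\NN(v,f) = |v+\Psi|^4(v+\Psi)$ and initial data $v_0 = u_0$ (since $\Psi(0) = 0$). Thus constructing $u$ in the class $\Psi + X^1([0,T])$ is exactly the same as constructing $v \in X^1([0,T])$ solving \eqref{P NLS} with $f = \Psi$. The heart of the argument is therefore to verify, almost surely, that the smallness hypotheses of Proposition \ref{Prop:loc} can be met on a sufficiently short (random) time interval $I = [0,T]$.

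First I would fix $R := \|u_0\|_{H^1}$. The regularity of $\Psi$ is controlled by Lemmas \ref{Lem:sconv} and \ref{Lem:sconv1}: part (i) of Lemma \ref{Lem:sconv} guarantees that $\Psi \in C([0,1]; H^1(\T^3))$ almost surely, so that $M := \|\Psi\|_{L^\infty_t H^1_x([0,1]\times\T^3)}$ is an almost surely finite random variable, which supplies the second hypothesis of Proposition \ref{Prop:loc}. Next, Lemma \ref{Lem:sconv1} (applied with $s=1$, $q = r = \tfrac{2p_0}{p_0-4} = 82$, and any $1\le p<\infty$) shows that $\|\Psi\|_{Z([0,1])} = \|\Psi\|_{\ell^2 L^{82}_t W^{1,82}_x([0,1]\times\T^3)}$ is almost surely finite. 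The key point is that the $Z$-norm is defined via an $L^q_t$ integral in time with $q < \infty$, so by the dominated convergence theorem (or monotonicity of the integral) $\|\Psi\|_{Z([0,T])} \to 0$ almost surely as $T \downarrow 0$. Likewise, by Proposition \ref{Prop:linear} and \eqref{eq_wenorm} we have $S(t)u_0 \in \wt X^1$, and since $\|S(t)u_0\|_{\wt X^1([0,T])}$ is controlled by an $L^{p}_t$-type norm with finite exponent (see \eqref{weak-norm}), it too tends to $0$ as $T\downarrow 0$.

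With these facts in hand, the construction proceeds as follows. Let $\eta_0 = \eta_0(R,M) > 0$ be the threshold provided by Proposition \ref{Prop:loc}; note that $\eta_0$ depends only on $R$ (deterministic) and on the random variable $M$, hence is itself almost surely positive. Because both $\|S(t)u_0\|_{\wt X^1([0,T])}$ and $\|\Psi\|_{Z([0,T])}$ converge to $0$ almost surely as $T\downarrow 0$, I would \emph{define}
\[
T = T_\omega(u_0) := \sup\Big\{ T' \in (0,1] : \max\big(\|S(t)u_0\|_{\wt X^1([0,T'])},\ \|\Psi\|_{Z([0,T'])}\big) \le \eta_0 \Big\}.
\]
This $T$ is an almost surely positive stopping time, and on the interval $I = [0,T]$ the smallness condition $\max(\|S(t)u_0\|_{\wt X^1(I)}, \|\Psi\|_{Z(I)}) \le \eta_0$ holds. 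Proposition \ref{Prop:loc} then produces a unique solution $v \in X^1(I)$ to \eqref{P NLS}, and setting $u := v + \Psi$ yields a solution to \eqref{SNLS} in the class $\Psi + X^1([0,T])$. Since $v \in X^1(I) \hookrightarrow C(I; H^1(\T^3))$ and $\Psi \in C(I; H^1(\T^3))$ by Lemma \ref{Lem:sconv}(i), we get $u \in C([0,T]; H^1(\T^3))$. Uniqueness in the stated class is immediate from the uniqueness of $v$ in Proposition \ref{Prop:loc}.

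The main obstacle I anticipate is the measurability/stopping-time bookkeeping rather than any hard analysis: one must confirm that $M$, $\|\Psi\|_{Z([0,T])}$, and hence $T_\omega(u_0)$ are genuinely measurable and that $T_\omega$ is adapted (a stopping time) with respect to the underlying filtration. This requires knowing that $T \mapsto \|\Psi\|_{Z([0,T])}$ is almost surely continuous and nondecreasing (so that the supremum defining $T$ is attained and the event $\{T \ge t\}$ is $\mathcal{F}_t$-measurable), which follows from the continuity properties in Lemma \ref{Lem:sconv}. The analytic content — the vanishing of the relevant norms as $T\downarrow 0$ — is comparatively soft, since it hinges only on the finiteness of the time exponents in the $\wt X^1$- and $Z$-norms, exactly the feature that distinguishes them from the $X^1$-norm, which (by $X^1(I)\hookrightarrow L^\infty(I;H^1)$) cannot be made small.
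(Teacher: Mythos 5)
Your proposal is correct and is essentially the paper's own argument: the paper proves this corollary in a single line, invoking Proposition \ref{Prop:loc} with $f=\Psi$ together with Lemmas \ref{Lem:sconv} and \ref{Lem:sconv1}, exactly as you do (first-order expansion $u=v+\Psi$, $v_0=u_0$ since $\Psi(0)=0$, smallness of the $\wt X^1$- and $Z$-norms on a short random interval from the finiteness of the time exponents). One caveat on your stopping-time bookkeeping: since your threshold $\eta_0=\eta_0(R,M)$ is built from $M=\|\Psi\|_{L^\infty_t H^1_x([0,1]\times\T^3)}$, which is only $\mathcal{F}_1$-measurable, the time $T$ you define need not be adapted; this is repaired by the standard device of replacing $M$ with the running supremum $M_t=\|\Psi\|_{L^\infty_t H^1_x([0,t]\times\T^3)}$ (adapted, continuous, nondecreasing, with $M_t\to 0$ as $t\downarrow 0$) and taking $\eta_0(R,\cdot)$ nonincreasing, so that the first time the smallness constraint fails is a genuine stopping time.
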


Then, next proposition provides a priori control on the energy $E(u(t))$ defined by
\begin{equation}\label{energy}
   E(u(t)) := \frac 12 \int_{\T^3} |\nb u(t, x)|^2 dx
+ \frac{1}{6} \int_{\T^3} |u(t, x)|^6 dx.
\end{equation}
The a priori bound on the energy follows from Ito's lemma and the Burkholder-Davis-Gundy inequality. 
Note that one requires a proper  justification on  the application of Ito's lemma, 
which needs to go through a certain approximation argument (with  local well-posedness theory; Corollary \ref{COR_LWP}).
See \cite[Proposition 5.1]{CM18} for a similar proof on $\T^d$,
 and \cite{BD03, CL22, CLO21, OO20} on $\R^d$.

\begin{proposition}\label{Prop:as E}
Let $\phi\in\HS(L^2(\T^3);H^1(\T^3))$ 
and $u_0\in H^1(\T^3)$. Let $u$
be the solution to the defocusing energy-critical SNLS \eqref{SNLS} and let $T^\ast=T^\ast(\omega,u_0)$ be the forward maximal time of existence. Then, given any $T_0>0$, there exists $C=C(\|u_0\|_{H^1},\|\phi\|_{\HS(L^2;H^1)},T_0)>0$ such that for any stopping time $T$ with $0<T<\min(T^\ast,T_0)$ almost surely, we have
\begin{equation*}
    \mathbb{E}
    \Big[\sup_{0\leq t\leq T}
    \big( \|u(t) \|_{L^2}^2 + E(u(t)) \big)
    \Big]
    \leq C.
\end{equation*}
\end{proposition}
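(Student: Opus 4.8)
The plan is to apply It\^o's formula to the mass $M(u(t)):=\|u(t)\|_{L^2}^2$ and to the energy $E(u(t))$ from \eqref{energy}, and then to run a Gronwall argument on $\Phi(t):=M(u(t))+E(u(t))$ after controlling the resulting stochastic integrals by the Burkholder--Davis--Gundy (BDG) inequality. Since the deterministic energy-critical NLS \eqref{NLS1} conserves both mass and energy, the drift coming from the deterministic part of \eqref{SNLS} vanishes, so that along the stochastic flow $M$ and $E$ evolve only through (a) the It\^o correction (trace) terms generated by the additive noise $\phi\,dW$ and (b) a genuine martingale. Writing $\phi\,dW=\sum_n\phi(e_n)\,d\beta_n$ as in \eqref{Psi'}, I expect
\[
M(u(t))=M(u_0)+\|\phi\|_{\HS(L^2;L^2)}^2\,t+\mathcal M_{\mathrm{mass}}(t),
\]
together with an analogous identity for $E(u(t))$ whose drift is $\tfrac12\sum_n D^2E(u)[\phi e_n,\phi e_n]$ and whose martingale is $\mathcal M_{\mathrm{ener}}(t)=\int_0^t\Re\langle-\Dl u+|u|^4u,\,-i\phi\,dW\rangle$.

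First I would bound the drifts linearly in $\Phi$. The mass drift is the constant $\|\phi\|_{\HS(L^2;L^2)}^2\le\|\phi\|_{\HS(L^2;H^1)}^2$. For the energy, using $D^2E(u)[h,h]=\|\nb h\|_{L^2}^2+D^2(\tfrac16\|\cdot\|_{L^6}^6)(u)[h,h]$ with the pointwise bound $|D^2(\tfrac16\|\cdot\|_{L^6}^6)(u)[h,h]|\les\|u\|_{L^6}^4\|h\|_{L^6}^2$ and the Sobolev embedding $H^1(\T^3)\hookrightarrow L^6(\T^3)$, the energy drift is controlled by $\|\phi\|_{\HS(L^2;H^1)}^2(1+\|u\|_{L^6}^4)$. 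Since $\|u\|_{L^6}^4\les E(u)^{2/3}\les1+\Phi$, the total drift integrates to $\les\|\phi\|_{\HS(L^2;H^1)}^2\int_0^t(1+\Phi(s))\,ds$, which is exactly the linear-in-$\Phi$ input Gronwall needs.

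Next I would estimate the quadratic variations. For the mass, $d\langle\mathcal M_{\mathrm{mass}}\rangle\les\|\phi\|_{\HS(L^2;H^1)}^2\,\Phi\,dt$. For the energy, pairing $-\Dl u$ against $\nb\phi e_n$ in $L^2\times L^2$ and $|u|^4u$ against $\phi e_n$ in $L^{6/5}\times L^6$ yields $d\langle\mathcal M_{\mathrm{ener}}\rangle\les\|\phi\|_{\HS(L^2;H^1)}^2(\|\nb u\|_{L^2}^2+\|u\|_{L^6}^{10})\,dt\les\|\phi\|_{\HS(L^2;H^1)}^2(\Phi+\Phi^{5/3})\,dt$. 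Writing $G(\tau):=\E[\sup_{0\le t\le\tau}\Phi(t)]$ and applying BDG, I bound $\E[\sup_t|\mathcal M|]\les\E[\langle\mathcal M\rangle^{1/2}]$; estimating $\int_0^\tau\Phi\le T_0\sup\Phi$ and $\int_0^\tau\Phi^{5/3}\le T_0(\sup\Phi)^{5/3}$ and using Jensen's inequality (both exponents are $\le1$) produces martingale contributions $\les\|\phi\|_{\HS(L^2;H^1)}\sqrt{T_0}\,(G(\tau)^{1/2}+G(\tau)^{5/6})$. These sublinear powers of $G$ are then absorbed into $\tfrac12 G(\tau)$ by Young's inequality, leaving the closed inequality $G(\tau)\les C_0+C_1\int_0^\tau G(s)\,ds$, whence $G(T)\le C$ by Gronwall, with $C=C(\|u_0\|_{H^1},\|\phi\|_{\HS(L^2;H^1)},T_0)$.

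The two delicate points are the rigorous use of It\^o's formula and the legitimacy of absorbing $\tfrac12 G(\tau)$. For the latter, $G$ must be known finite a priori; I would introduce the stopping times $\tau_R:=\inf\{t\ge0:\Phi(t)\ge R\}\wedge T$, carry out the whole argument with $T$ replaced by $\tau_R$ (so that $G_R\le R<\infty$ makes the absorption valid), obtain a bound independent of $R$, and let $R\to\infty$; since $T<T^\ast$ ensures $\Phi$ is finite on $[0,T]$ almost surely, $\tau_R\uparrow T$ and monotone convergence gives the claim. The main obstacle is the first point: $u$ only carries $H^1$ regularity, whereas $E$ involves the unbounded operator $-\Dl$ and the nonlinearity $|u|^4u$, so It\^o's formula cannot be applied to $E(u(t))$ directly. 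I would resolve this through an approximation argument---smoothing the noise (for instance by frequency truncation $P_{\le N}\phi$) and the data, applying It\^o to the regularized, well-posed problems supplied by Corollary \ref{COR_LWP}, and passing to the limit using the regularity of the stochastic convolution in Lemma \ref{Lem:sconv}---exactly in the spirit of \cite[Proposition 5.1]{CM18}.
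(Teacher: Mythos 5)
Your proposal follows exactly the route the paper indicates for Proposition \ref{Prop:as E}: It\^o's formula for the mass and energy, control of the drift and of the martingale via the Burkholder--Davis--Gundy inequality, a Gronwall argument closed by stopping times, and an approximation argument (in the spirit of \cite[Proposition 5.1]{CM18}, using the local theory of Corollary \ref{COR_LWP}) to justify applying It\^o's formula at $H^1$ regularity. The paper itself only sketches this and defers to \cite{CM18, BD03, CL22, CLO21, OO20}, so your fleshed-out version is a correct realization of the same argument.
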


Finally, we have everything we need to
prove Theorem \ref{Th}.
While the argument is the same as the proof of Proposition 7.2 in \cite{BOP15},
we give a proof here for completeness.

\begin{proof}[Proof of Theorem \ref{Th}]
Let $T>0$.
It follows from Lemmas \ref{Lem:sconv} and \ref{Lem:sconv1} with \eqref{norm:Z} that
\begin{equation}
\|\Psi\|_{L^\infty_T H^1_x}
+ \|\Psi\|_{Z([0,T])}
\le 
M(\o, T, \|\phi\|_{\HS(L^2;H^1)}) =:M.
\label{PsiM}
\end{equation}
Suppose that the solution $u$ to \eqref{SNLS} exists on $[0,T]$.
Then,
it follows from \eqref{energy},
\eqref{PsiM},
and Proposition \ref{Prop:as E}
that
\begin{align}
\label{eq_asbd}
\begin{aligned}
\|v\|_{L^\infty_T H^1_x }
&\le  \|u\|_{L^\infty_T H^1_x } + \|\Psi\|_{L^\infty_T H^1_x }
\le \sup_{0\le t\le T}
\big( \| u(t) \|_{L^2} + E(u(t))^\frac12 \big) + \|\Psi\|_{L^\infty_T H^1_x }\\
&\leq C(\omega, T, \| u_0 \|_{H^1}, \|\phi\|_{\HS(L^2;H^1)})
=:R .
\end{aligned}
\end{align}

We pick any $t_0\in (0,T)$ 
and suppose that the solution $v$ to \eqref{SNLS2} 
has already been constructed on $[0,t_0]$. 
We are going to show 
the existence of a unique solution $v$ 
to \eqref{SNLS2} on $[t_0,t_0+\tau]\cap[0,T]$ with $\tau>0$ independent of $v(t_0)$. 
Then, we can iterate the argument, and the
solution $v$ to \eqref{SNLS2} 
can be constructed on $[0,T]$. 
In this way, 
we prove Theorem \ref{Th}.

Let $w$ be the global solution to
the deterministic defocusing energy-critical NLS \eqref{Eq:w NLS}
with $w_0=v(t_0)$. 
By \eqref{eq_asbd}, we have $\| w(t_0)\|_{H^1}\leq R$.
From \eqref{eq_wenorm} and Lemma \ref{st-bound}, 
we obtain
\begin{equation}
    \|w\|_{\wt X^1([t_0,T])}\les
    \|w\|_{X^1([t_0,T])}\leq C(R).
\label{bdWI1}
\end{equation}

\noi
Note that the critical $X^1$-norm is never small when we shrink the time interval.
However, the weaker critical $\wt X^1$-norm (see \eqref{weak-norm})
shrinks when the time interval gets small.
Hence,
by letting $0<\eta\ll 1$ small to be chosen later,
we can divide the interval 
$[t_0,T]$ into $J=J(R,\eta)$ subintervals $I_j=[t_j,t_{j+1}]$
such that
\begin{equation}
\|w\|_{\wt X^1(I_j)}
\le \eta
\label{bdwI}
\end{equation}

\noi
for $j=0,1,\dots, J-1$. 
Moreover,
by
\eqref{PsiM}, \eqref{norm:Z},
and retaking time intervals, if necessary,
we have
\begin{equation}
\label{th est6}
\|\Psi\|_{Z(I_j)}
\le
\eta
\end{equation}
for $j=0,1,\dots, J-1$.

Fix $\tau>0$ to be chosen later.
Write $[t_0,t_0+\tau] = \bigcup_{j=0}^{J'} ([t_0,t_0+\tau] \cap I_j)$
for some $J' \le J-1$,
where
$[t_0,t_0+\tau] \cap I_j \neq \emptyset$ for $j \le J'$
and
$[t_0,t_0+\tau] \cap I_j = \emptyset$ for $j > J'$.

One can observe from above that the nonlinear flow $w$ is small on each $I_j$,
it follows that the linear evolution $S(t - t_j )w(t_j )$ is also small on each $I_j$.
From 
the Duhamel formula
\[
S(t - t_j )w(t_j )
=
w(t) + i
\int_{t_j}^t S(t-t') (|w|^4w)(t') dt',
\]

\noi
the nonlinear estimate \eqref{eq:non-est}
and \eqref{M-norm},
we have
\begin{align}
\notag
\begin{aligned}
    \|S(t-t_j)w(t_j)\|_{\wt X^1(I_j)}
    &\leq \|w\|_{\wt X^1(I_j)}
    +
    \big\| |w|^4w \big\|_{N^1(I_j)} \\
    &\le
    \eta+C \|w\|_{X^1(I_j)}^3 \|w\|_{\wt X^1(I_j)}^2
    \end{aligned}
\end{align}

\noi
for $j=0,1,\dots, J-1$.
By \eqref{bdWI1} and \eqref{bdwI},
we choose $0<\eta\ll1$ sufficiently small 
such that
\[
C \cdot C(R)^3 \eta^2 \leq \eta,
\]
which yields that
\begin{equation}
\label{th est4}
\|S(t-t_j)w(t_j)\|_{\wt X^1(I_j)}
\le
2\eta
\end{equation}
for $j=0,1,\dots, J-1$.

We now consider the estimate of $v$ on the first time interval 
$I_0$.
It follows from $v(t_0)=w(t_0)$ and \eqref{th est4} that
\begin{equation}
\label{th est5}
    \|S(t-t_0)v(t_0)\|_{\wt X^1(I_0)}
    =
    \|S(t-t_0)w(t_0)\|_{\wt X^1(I_0)}
    \le 2\eta.
\end{equation}

\noi
Let $\eta_0=\eta_0(R,M)>0$ be as in Proposition \ref{Prop:loc}.
By Proposition \ref{Prop:loc}
along with \eqref{PsiM}, \eqref{eq_asbd}, \eqref{th est6}, and \eqref{th est5},
we have
\begin{equation}
\begin{aligned}
\|v\|_{X^1(I_0)}
&\le
\|S(t-t_0) v(t_0) \|_{X^1(I_0)}
+
\|v-S(t-t_0) v(t_0) \|_{X^1(I_0)}
\\
&\le
R + C \eta
\le 2R
\label{vI0a}
\end{aligned}
\end{equation}

\noi
as long as 
$0<2\eta\leq\eta_0$.

Next, we estimate the error term $e$ defined in \eqref{pert}.
By using Proposition \ref{Prop:non est} with
\eqref{th est6} and \eqref{vI0a},
 we have
\begin{align*}
\| e \|_{N^1(I_0)}
&\les
|I_0|^\theta
\Big(
\|v\|_{X^1(I_0)}^2
\sum_{k=1}^3
\|v\|_{X^1(I_0)}^{3-k}\|\Psi\|_{Z(I_0)}^k
+ \| v \|_{X^1(I_0)} \| \Psi \|_{Z(I_0)}^4
+ \| \Psi \|_{Z(I_0)}^5
\Big)
\\
&\les
\tau^\theta
\big(
R^4 + 1
\big).
\end{align*}

\noi
Given $\eps>0$, 
by taking $\tau =\tau (R, \eps) \ll 1$,
we obtain
\begin{align*}
\| e \|_{N^1(I_0)}
\le\eps.
\end{align*}
In particular,
for $\eps < \eps_0$ with $\eps_0 = \eps_0(R) > 0$ 
dictated by Lemma \ref{Lem:Per}, the smallness condition \eqref{eq_smallness} is
satisfied on $I_0$.

Therefore, all the conditions of Lemma  \ref{Lem:Per} are satisfied on the first interval $I_0$, 
provided that $\tau = \tau (R)$ is chosen sufficiently small.
Thus, we have
\begin{align*}
    \|v\|_{X^1(I_0)}+\|w\|_{X^1(I_0)}\leq C_0(R),
    \quad \quad
    \|v-w\|_{X^1(I_0)}\leq C_0(R)\eps.
\end{align*}

\noi
In particular, with $C_1(R) := C_0(R)$,
we have
\begin{align}\label{th est7}
    \|v(t_1)-w(t_1)\|_{H^1}\leq C_1(R)\eps.
\end{align}

If $t_1 \ge \tau$,
we are done. 
If $t_1< \tau$,
we then use the iterative argument to move on the next time interval $I_1$.
By the triangle inequality,
Proposition \ref{Prop:linear}, 
\eqref{th est4}, and \eqref{th est7}, we derive 
\[
\begin{aligned}
    \|S(t-t_1)v(t_1)\|_{\wt X^1(I_1)}
    &\leq
    \|S(t-t_1)w(t_1)\|_{\wt X^1(I_1)}
    +\|S(t-t_1)(v-w)(t_1)\|_{\wt X^1(I_1)}\\
    &\le
    2\eta+\wt C_1(R)\eps
    \le 3\eta,
\end{aligned}
\]

\noi
where $\eps=\eps(R,\eta)>0$
sufficiently small such that 
$\wt C_1(R)\eps<\eta$.
From \eqref{PsiM}, \eqref{eq_asbd}, and \eqref{th est6},
we can apply Proposition \ref{Prop:loc} on $I_1$
as long as $3\eta\le\eta_0$.
Then, we have
\begin{equation}
\begin{aligned}
\|v\|_{X^1(I_1)}
&\le
\|S(t-t_1) v(t_1) \|_{X^1(I_1)}
+
\|v-S(t-t_1) v(t_1) \|_{X^1(I_1)}
\\
&\le
R + C \eta
\le 2R
\end{aligned}
\label{vI1a}
\end{equation}
as long as $3\eta\le\eta_0$.
Moreover,
it follows
from Proposition \ref{Prop:non est}
with \eqref{th est6}  and \eqref{vI1a}
that
\[
\| e \|_{N^1(I_1)}
\les
\tau^\theta
\big(
R^4+1\big)
\le\eps
\]
by choosing $\tau=\tau(R,\eps)>0$ 
sufficiently small.
Therefore, all the conditions of Lemma  \ref{Lem:Per} are satisfied on the interval $I_1$, 
provided that $\tau=\tau(\o, M,R, T, \eps)>0$ 
is chosen sufficiently small and that 
$(C_1(R)+1)\eps<\eps_0$.
Then, by Lemma \ref{Lem:Per},
we get
\begin{align*}
\|v-w\|_{X^1(I_1)}\leq C_0(R)(C_1(R)+1)\eps
=: C_2(R) \eps.
\end{align*}

\noi
In particular, we have
\begin{align*}
    \|v(t_2)-w(t_2)\|_{H^1}\leq C_2(R)\eps.
\end{align*}

Set
\[
C_j(R) := C_0(R) (C_{j-1}(R)+1)
\]
for $j=2,\dots,J'$.
We can argue inductively and obtain
\[
\| w(t_j) - v(t_j) \|_{H^1} \le C_j (R) \eps
\]
for all 
$j=0, \dots, J'$,
as long as
$\eta$, $\tau$, and $\eps$ satisfy
\[
(J'+2) \eta \le \eta_0,
\quad
(C_{J'}(R)+1) \eps < \eps_0,
\quad
\tau^\theta
\big(
R^4 + 1
\big)
\ll \eps.
\]
This guarantees existence of the solution $v$ to \eqref{SNLS2} on
$[t_0,t_0+\tau] \cap [0,T]$.
This completes the proof of Theorem \ref{Th}.
\end{proof}

\begin{ackno}
\rm 
The authors would like to thank Prof. Tadahiro Oh for suggesting this problem.
G.L.~was supported by The Maxwell Institute Graduate School in Analysis and its Applications, 
a Centre for Doctoral Training funded by the UK Engineering and Physical Sciences Research Council (Grant EP/L016508/01), 
the Scottish Funding Council, Heriot-Watt University and the University of Edinburgh;
by the European Research Council (grant no.~637995 ``ProbDynDispEq'' and grant no. 864138 ``SingStochDispDyn'');
and by the EPSRC New Investigator Award (grant no.~EP/S033157/1).
M.O. was supported by JSPS KAKENHI Grant number JP23K03182.
M.O. would like to thank the School of Mathematics at the University of Edinburgh for its hospitality, where this manuscript was prepared.
\end{ackno}

\end{document}